\newtheorem{thm}{Theorem}
\newtheorem{lem}{Lemma}
\newtheorem{cor}{Corollary}
\DeclareSymbolFont{script}{U}{eus}{m}{n}
\DeclareMathSymbol{\Wedge}{0}{script}{"5E}
\newcommand{\oo}[2]{\begin{picture}(24,11)
\put(4,1.5){\makebox(0,0){$\bullet$}}
\put(20,1.5){\makebox(0,0){$\bullet$}}
\put(4,1.5){\line(1,0){16}}
\put(4,8){\makebox(0,0){$\scriptstyle #1$}}
\put(20,8){\makebox(0,0){$\scriptstyle #2$}}
\end{picture}}
\title[Metrisability]
{Metrisability of three-dimensional projective structures}
\author[Michael Eastwood]{Michael Eastwood}
\address{\hskip-\parindent
School of Mathematical Sciences,
University of Adelaide,\newline 
SA 5005, Australia}
\email{meastwoo@member.ams.org}
\subjclass{53A20}
\thanks{This work was supported by the Simons Foundation grant 346300 and the
Polish Government MNiSW 2015--2019 matching fund. It was carried out whilst the
author was at the Banach Centre at IMPAN in Warsaw for the Simons Semester
`Symmetry and Geometric Structures.' I would also like to thank,
firstly, Katharina Neusser, who was a Simons Professor during this semester,
for many useful remarks concerning the metrisability problem, secondly, Maciej
Dunajski for pointing out that my original proof of Theorem~\ref{mobility} was
incomplete, and, thirdly, Felipe Contatto and Maciej Dunajski for many useful 
remarks concerning Theorem~\ref{degenerate_bound}.}
\begin{document}\raggedbottom
\begin{abstract} 
We solve the metrisability problem for generic three-dimensional projective
structures.
\end{abstract}    
\maketitle 

\setcounter{section}{-1}
\section{Introduction}
Let $M$ be a connected smooth oriented three-dimensional manifold. Suppose
$\nabla_a:TM\to\Wedge^1\otimes TM$ is a torsion-free connection. We shall say
that a symmetric covariant $2$-tensor $g_{ab}$ is a {\em metric\/} if and only
if it is non-degenerate (irrespective of its signature). In this article we
find necessary and sufficient local conditions, for a generic
connection~$\nabla_a$ (precisely in the sense of Corollary~\ref{punchline}), in
order that there be a metric $g_{ab}$ whose geodesics coincide with the
geodesics of $\nabla_a$ as unparameterised curves. The two-dimensional case was
solved in~\cite{BDE}. The three-dimensional case has an entirely different
character. Obstructions to metrisability in the three-dimensional case were
found in~\cite{DE} to which we refer for notational details and background here
omitted. As explained in~\cite{DE}, we may always fix a volume form
$\epsilon_{bcd}$ and normalise $\nabla_a$ so that $\nabla_a\epsilon_{bcd}=0$.
Henceforth, we shall suppose this has been done. We shall refer to connections
with the same unparameterised geodesics as {\em projectively equivalent}.

The following theorem was proved in the two-dimensional case by
R.~Liouville~\cite{L} and in general by Mike\v{s}~\cite{M} following
observations of Sinjukov~\cite{S} (see also~\cite{EM}). In three dimensions we 
have:

\begin{thm} For the existence of a metric whose Levi-Civita connection is
projectively equivalent to~$\nabla_a$, it is necessary and sufficient to have a
non-degenerate symmetric tensor $\sigma^{ab}$ such that
\begin{equation}\label{metrisability}\textstyle
(\nabla_a\sigma^{bc})_\circ\equiv\nabla_a\sigma^{bc}
-\frac12\delta_a{}^{(b}\nabla_d\sigma^{c)d}=0.
\end{equation}
\end{thm}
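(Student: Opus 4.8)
The plan is to realise an explicit correspondence, in the spirit of Liouville, Sinjukov, and Mike\v{s}, between metrics whose Levi-Civita connection is projectively equivalent to $\nabla_a$ and non-degenerate solutions $\sigma^{bc}$ of~\eqref{metrisability}. The device that linearises the problem is to attach to a metric $g_{ab}$ not its bare inverse but the correctly weighted inverse
\[
\sigma^{bc}=(\det g)^{1/(n+1)}\,g^{bc},\qquad n=3,
\]
where $g^{bc}$ is the inverse metric and $\det g$ is taken relative to the fixed volume form $\epsilon_{bcd}$ (so that, since $\nabla_a\epsilon_{bcd}=0$, it may be treated as a function). The exponent $1/(n+1)$ is the crux: it is exactly the weight that makes the resulting first-order equation both projectively invariant and linear in $\sigma^{bc}$.

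For necessity, suppose $\hat\nabla_a$ is the Levi-Civita connection of $g_{ab}$ and write the projective change as $\hat\nabla_a=\nabla_a+\Upsilon_a$, meaning $\hat\Gamma_{ab}{}^c=\Gamma_{ab}{}^c+\delta_a{}^c\Upsilon_b+\delta_b{}^c\Upsilon_a$ for a $1$-form $\Upsilon_a$. The condition $\hat\nabla_ag_{bc}=0$ rearranges to $\nabla_ag_{bc}=2\Upsilon_ag_{bc}+\Upsilon_bg_{ac}+\Upsilon_cg_{ab}$. Tracing with $g^{bc}$ gives $\nabla_a\log|\det g|=2(n+1)\Upsilon_a$, hence $\nabla_a(\det g)^{1/(n+1)}=2(\det g)^{1/(n+1)}\Upsilon_a$, while the inverse metric obeys $\nabla_ag^{bc}=-2\Upsilon_ag^{bc}-\delta_a{}^b\Upsilon^c-\delta_a{}^c\Upsilon^b$ (indices raised with $g$). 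In the product rule for $\sigma^{bc}=(\det g)^{1/(n+1)}g^{bc}$ the two terms in $\Upsilon_ag^{bc}$ cancel exactly, leaving
\[
\nabla_a\sigma^{bc}=-\delta_a{}^{(b}\Psi^{c)},\qquad \Psi^c:=2(\det g)^{1/(n+1)}\Upsilon^c,
\]
which is purely of trace type. Contracting $a$ with $c$ yields $\Psi^c=-\tfrac{2}{n+1}\nabla_d\sigma^{cd}$, and substituting back reproduces~\eqref{metrisability} precisely when $n=3$, since then $\tfrac{2}{n+1}=\tfrac12$. Non-degeneracy of $\sigma^{bc}$ is inherited from that of $g^{bc}$.

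For sufficiency, I reverse the construction. A non-degenerate $\sigma^{bc}$ has an inverse $\sigma_{bc}$, and the weight bookkeeping forces a unique positive function $\rho$ with $g_{bc}:=\rho\,\sigma_{bc}$ (equivalently $g^{bc}=\rho^{-1}\sigma^{bc}$) satisfying $(\det g)^{1/(n+1)}g^{bc}=\sigma^{bc}$; one solves $\rho^{\,n+1}=|\det g|=\rho^{\,n}/|\det\sigma^{bc}|$ to pin $\rho$ down. Equation~\eqref{metrisability} says exactly that $\nabla_a\sigma^{bc}$ has the pure-trace form $-\delta_a{}^{(b}\Psi^{c)}$ with $\Psi^c=-\tfrac12\nabla_d\sigma^{cd}$. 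Defining $\Upsilon^c:=\Psi^c/\bigl(2(\det g)^{1/(n+1)}\bigr)$, lowering to $\Upsilon_a:=g_{ac}\Upsilon^c$, and running the identities of the previous paragraph backwards gives $\hat\nabla_ag_{bc}=0$ for $\hat\nabla_a=\nabla_a+\Upsilon_a$. A projective change is automatically torsion-free, so $\hat\nabla_a$ is a torsion-free metric connection, hence the Levi-Civita connection of $g_{bc}$, and it is projectively equivalent to $\nabla_a$ by construction.

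I expect the main obstacle to be the weight computation that forces the cancellation of the $\Upsilon_ag^{bc}$ terms: one must check that $1/(n+1)$ is the unique exponent rendering $\nabla_a\sigma^{bc}$ purely of trace type, and then that the surviving numerical coefficient agrees with the $\tfrac12$ in~\eqref{metrisability} exactly at $n=3$. In the converse direction the corresponding care lies in confirming that $\sigma^{bc}\mapsto g_{bc}$ is genuinely inverse to $g_{bc}\mapsto\sigma^{bc}$ and that the reconstructed $\hat\nabla_a$ is truly metric rather than merely projectively so---the point at which non-degeneracy of $\sigma^{bc}$ is indispensable.
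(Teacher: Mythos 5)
Your proof is correct. The paper does not prove this theorem itself---it cites Liouville, Sinjukov, Mike\v{s}, and Eastwood--Matveev \cite{EM}---and your argument, linearising the problem via the weighted inverse $\sigma^{bc}=(\det g)^{1/(n+1)}g^{bc}$ (with $\det g$ taken against the parallel volume form) and checking that the pure-trace terms reproduce exactly the $\tfrac12$ in~(\ref{metrisability}) when $n=3$, is precisely the standard proof given in those references, including the reverse construction $g_{bc}=\sigma_{bc}/\det\sigma$ that the paper itself uses in its worked example.
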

This `metrisability equation' may be investigated by prolongation and it
emerges that if $\sigma^{bc}$ satisfies~(\ref{metrisability}),
then~\cite[Equation~(13)]{DE}
\begin{equation}\label{Vsigma}V^{(ab}{}_d\sigma^{c)d}=0,\end{equation}
where $V^{ab}{}_c=V^{(ab)}{}_c$ is the trace-free tensor defined by
\begin{equation}\label{V}2\epsilon^{de(a}\nabla_d\nabla_eX^{b)}=V^{ab}{}_cX^c
\end{equation}
and $\epsilon^{abc}$ is dual to $\epsilon_{abc}$ (normalised such that
$\epsilon^{abc}\epsilon_{abc}=6$, for example). The tensor $V^{ab}{}_c$ is
equivalent to the usual projectively invariant {\em Weyl tensor\/} and, as is
shown in~\cite[Theorem~1.3]{DE}, if $\sigma^{bc}$ is non-degenerate and 
(\ref{Vsigma}) holds, then the projectively invariant tensor
\begin{equation}\label{Q}
Q_{ab}{}^c\equiv\epsilon_{pq(a}V^{pr}{}_{b)}V^{qc}{}_r\end{equation}
must vanish. 

The vanishing of $Q_{ab}{}^c$ is the primary obstruction to metrisability and 
in this article we shall show that in this case, under some mild genericity 
assumptions on the projective structure, we can find further necessary 
conditions that are sufficient for a complete solution to the metrisability 
problem.

It is useful to make some preliminary observations on the solution space to the
metrisabilty equation (\ref{metrisability}) as follows.
\begin{lem}\label{algebraic_lemma} Suppose $\rho^{bc}$ and $\sigma^{bc}$ are
symmetric $3\times 3$ matrices and that
\begin{equation}\label{pure_trace}
f_a\rho^{bc}+h_a\sigma^{bc}=\delta_a{}^{(b}\kappa^{c)}\end{equation}
for some covectors $f_a$ and $h_a$, and some vector $\kappa^c$. Then 
$\kappa^c=0$. 
\end{lem}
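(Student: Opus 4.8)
The plan is to read equation~(\ref{pure_trace}) as an identity among symmetric $3\times3$ matrices indexed by~$a$. For each fixed $a\in\{1,2,3\}$ its left-hand side is the combination $f_a\rho^{bc}+h_a\sigma^{bc}$, so each of the three matrices $M_a{}^{bc}\equiv\delta_a{}^{(b}\kappa^{c)}$ lies in the span of $\rho^{bc}$ and~$\sigma^{bc}$ inside the six-dimensional space of symmetric $3\times3$ matrices. Since this span has dimension at most~$2$, the three matrices $M_1{}^{bc}$, $M_2{}^{bc}$, $M_3{}^{bc}$ cannot be linearly independent: there must be a nontrivial relation $\lambda^aM_a{}^{bc}=0$.

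The crux is to analyse such a relation, and here the symmetrisation works in our favour. Contracting the free index~$a$ gives
$$\lambda^aM_a{}^{bc}=\lambda^a\delta_a{}^{(b}\kappa^{c)}=\lambda^{(b}\kappa^{c)},$$
so the relation says precisely that the symmetric product of the vectors $\lambda^a$ and~$\kappa^c$ vanishes. But such a product can vanish only if one of the two vectors does: if $\kappa^i\neq0$ for some~$i$, then the diagonal equation $\lambda^i\kappa^i=0$ forces $\lambda^i=0$, and the remaining equations $\lambda^i\kappa^j+\lambda^j\kappa^i=0$ then force $\lambda^j=0$ for every other index~$j$. Thus if $\kappa^c$ were nonzero the only relation among the $M_a{}^{bc}$ would be the trivial one, contradicting the linear dependence forced by the dimension count.

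Hence $\kappa^c=0$, as claimed. I expect the argument to be entirely elementary, the one point deserving care being the verification that $\lambda^{(b}\kappa^{c)}=0$ with $\kappa^c\neq0$ implies $\lambda^a=0$. Note also that no nondegeneracy of $\rho^{bc}$ or~$\sigma^{bc}$ enters: should they happen to be proportional, the span drops to dimension~$\le1$ and the dependence among the $M_a{}^{bc}$ only becomes more pronounced, so the conclusion holds in every case.
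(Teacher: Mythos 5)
Your proposal is correct and is essentially the paper's own argument in frame-free form: the paper chooses a basis in which $f_3=h_3=0$ (i.e., picks a nonzero vector annihilated by both covectors) and reads off the $a=3$ component of \eqref{pure_trace}, whereas you produce a nonzero contraction vector $\lambda^a$ by a dimension count on the three matrices $M_a{}^{bc}$ lying in the span of $\rho^{bc}$ and $\sigma^{bc}$. Both proofs then finish with the same elementary observation, which you simply spell out in more detail than the paper's ``immediate'': a vanishing symmetric product $\lambda^{(b}\kappa^{c)}=0$ with $\lambda^a\neq0$ forces $\kappa^c=0$.
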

\begin{proof} Without loss of generality, choose a frame so that $f_3=h_3=0$.
Then (\ref{pure_trace}) implies that $0=\delta_3{}^{(3}\kappa^{c)}$, from which
$\kappa^c=0$ is immediate.
\end{proof}
\begin{lem}\label{constancy} Suppose $\rho^{bc}$ and $\sigma^{bc}$ are linearly
independent solutions to~\eqref{metrisability}. If $f\rho^{bc}+h\sigma^{bc}$ is
also a solution, then $f$ and $h$ must be constant.
\end{lem}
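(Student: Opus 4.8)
The plan is to substitute the combination $\tau^{bc}:=f\rho^{bc}+h\sigma^{bc}$ into the metrisability operator $(\nabla_a\,\cdot\,)_\circ$ and exploit that this operator is a \emph{pointwise algebraic} linear projection. Explicitly, $(\,\cdot\,)_\circ$ subtracts a term of the shape $\delta_a{}^{(b}(\cdots)^{c)}$, and a short index computation (using $\delta_d{}^d=3$) shows it annihilates precisely the pure-trace tensors $\delta_a{}^{(b}\mu^{c)}$; being purely algebraic in its argument, it commutes with multiplication by a function. Writing $f_a:=\nabla_a f$ and $h_a:=\nabla_a h$, the Leibniz rule gives $\nabla_a\tau^{bc}=f_a\rho^{bc}+h_a\sigma^{bc}+f\nabla_a\rho^{bc}+h\nabla_a\sigma^{bc}$, and I would apply $(\,\cdot\,)_\circ$ to this identity.

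First I would note that applying $(\,\cdot\,)_\circ$ to the Leibniz identity kills three of the four terms: the left-hand side vanishes by hypothesis, while $(f\nabla_a\rho^{bc})_\circ=f(\nabla_a\rho^{bc})_\circ=0$ and $(h\nabla_a\sigma^{bc})_\circ=h(\nabla_a\sigma^{bc})_\circ=0$ because $\rho^{bc}$ and $\sigma^{bc}$ each solve~\eqref{metrisability}. What survives is $(f_a\rho^{bc}+h_a\sigma^{bc})_\circ=0$, which means that $f_a\rho^{bc}+h_a\sigma^{bc}$ is pure-trace, say $f_a\rho^{bc}+h_a\sigma^{bc}=\delta_a{}^{(b}\kappa^{c)}$ for some vector $\kappa^c$. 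This is exactly the algebraic situation of Lemma~\ref{algebraic_lemma}, so I would invoke it to conclude $\kappa^c=0$, leaving the pointwise identity $f_a\rho^{bc}+h_a\sigma^{bc}=0$. For each fixed index $a$ this is a vanishing real linear combination of the symmetric matrices $\rho^{bc}$ and $\sigma^{bc}$ with coefficients $f_a$ and $h_a$, so linear independence forces $f_a=h_a=0$ for every $a$; that is, $\nabla_a f=\nabla_a h=0$ and hence $f$ and $h$ are constant.

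The delicate point, which I expect to be the main obstacle, is the passage in the final step from ``$\rho^{bc}$ and $\sigma^{bc}$ linearly independent'' to the pointwise linear independence of these matrices that is needed to read off $f_a=h_a=0$. I would handle this by working on the open dense set where the two solutions are pointwise nonproportional, deducing $\nabla_a f=\nabla_a h=0$ there, and then extending by continuity; since $M$ is connected, this yields that $f$ and $h$ are globally constant. Everything else is routine tensor bookkeeping, and the crux of the argument is simply the recognition that the Leibniz gradient terms survive the projection while the solution-derivative terms do not, reducing the problem to the already-established Lemma~\ref{algebraic_lemma}.
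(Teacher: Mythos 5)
Your core computation coincides exactly with the paper's: apply the Leibniz rule, use that $(\,\cdot\,)_\circ$ is algebraic and linear over functions to kill the terms $f(\nabla_a\rho^{bc})_\circ$ and $h(\nabla_a\sigma^{bc})_\circ$, and invoke Lemma~\ref{algebraic_lemma} to pass from $\big((\nabla_af)\rho^{bc}+(\nabla_ah)\sigma^{bc}\big)_\circ=0$ to $(\nabla_af)\rho^{bc}+(\nabla_ah)\sigma^{bc}=0$. That part is fine. The genuine gap is in what you yourself flag as the delicate point: you assert that the set $U$ on which $\rho^{bc}$ and $\sigma^{bc}$ are pointwise nonproportional is open and \emph{dense}, but linear independence of the two solutions as sections gives you no such thing. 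A priori $U$ could fail to be dense, or even be empty: nothing in your argument excludes, say, $\rho^{bc}=h\,\sigma^{bc}$ everywhere on $M$ with $h$ a nonconstant smooth function, in which case $U=\emptyset$ and your proof establishes nothing --- and note that this scenario, were it possible, would be an outright counterexample to the lemma (the zero section is $\rho^{bc}-h\sigma^{bc}$, a ``solution'' with nonconstant coefficients), so ruling it out is part of the content of the lemma, not a removable technicality.

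Closing this gap is precisely what the first half of the paper's proof does, and it needs an ingredient your proposal never invokes: unique continuation for solutions of \eqref{metrisability}, coming from prolongation (\cite[Theorem~3.1]{EM}), namely that a solution vanishing on an open subset of connected $M$ vanishes identically. The paper's argument runs: if $\rho^{bc}=h\sigma^{bc}$ on an open set (with $\sigma^{bc}\neq0$ there), then $\big((\nabla_ah)\sigma^{bc}\big)_\circ=0$, so Lemma~\ref{algebraic_lemma} forces $h$ to be locally constant there; but then $\rho^{bc}-c\,\sigma^{bc}$ is a solution vanishing on an open set for some constant $c$, hence vanishes identically by prolongation, contradicting linear independence. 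Only this establishes that the proportionality locus has empty interior, i.e.\ that your $U$ is dense, after which your continuity argument (continuous $\nabla_af$, $\nabla_ah$ vanishing on a dense set, $M$ connected) does legitimately finish --- indeed slightly more cleanly than the paper, which instead propagates the constancy of $f$ and $h$ from one open set to all of $M$ by appealing to the same prolongation theorem a second time. But without the unique continuation input, the density you lean on is simply not available.
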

\begin{proof} Firstly, note that $\rho^{bc}$ and $\sigma^{bc}$ cannot be
proportional on an open set since, if $h\sigma^{bc}$
solves~(\ref{metrisability}), then $\big((\nabla_ah)\sigma^{bc}\big)_\circ=0$
and Lemma~\ref{algebraic_lemma} implies that $h$ is locally constant
(cf.~\cite{EPS}). Hence, we may suppose, without loss of generality, that
$\rho^{bc}$ and $\sigma^{bc}$ are pointwise linearly independent (since this is
true on an open subset of $M$ and, as soon as $f$ and $h$ are constant on an
open subset, then they are everywhere constant by prolongation (see 
\cite[Theorem~3.1]{EM})). Substituting $f\rho^{bc}+h\sigma^{bc}$ into 
(\ref{metrisability}) gives
$$\big((\nabla_af)\rho^{bc}\big)_\circ
+\big((\nabla_ah)\sigma^{bc}\big)_\circ=0.$$
It follows from Lemma~\ref{algebraic_lemma} that 
$$(\nabla_af)\rho^{bc}+(\nabla_ah)\sigma^{bc}=0$$
and linear independence forces $\nabla_af=\nabla_ah=0$, as required.
\end{proof}

\section{The Pl\"ucker relations}
Suppose ${\mathbb{W}}$ is a $6$-dimensional real vector space and
$V\in\Wedge^2{\mathbb{W}}$. We shall say that $V$ is {\em simple\/} if and only
if $V=\rho\wedge\sigma$ for some $\rho,\sigma\in{\mathbb{W}}$. Consider
$V\wedge V\in\Wedge^4{\mathbb{W}}$, which evidently vanishes if $V$ is simple.
The converse is shown in~\cite{EMi}:
\begin{thm}\label{strong_pluecker}
The tensor $V$ is simple if and only if $V\wedge V=0$.
\end{thm}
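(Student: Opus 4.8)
The forward implication is the remark already recorded in the text: if $V=\rho\wedge\sigma$ then $V\wedge V=\rho\wedge\sigma\wedge\rho\wedge\sigma=0$ because $\rho$ occurs twice. For the converse my plan is to show that the hypothesis $V\wedge V=0$ forces $V$ to be divisible by some non-zero vector and then to observe that such divisibility is exactly simplicity. If $V=0$ there is nothing to prove, so suppose $V\neq0$.

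The key device is to contract the quartic identity $V\wedge V=0$ with a covector. Since $V\neq0$ I may choose $\xi\in{\mathbb{W}}^*$ so that $\rho:=\xi\lrcorner V\in{\mathbb{W}}$ is non-zero. Applying the contraction $\xi\lrcorner(-)$, which is an anti-derivation of degree $-1$, to $V\wedge V$, and using that $\xi\lrcorner V$ has degree~$1$ whilst $V$ has degree~$2$ (so the two resulting terms coincide), I obtain
$$0=\xi\lrcorner(V\wedge V)=(\xi\lrcorner V)\wedge V+V\wedge(\xi\lrcorner V)=2\,\rho\wedge V,$$
whence $\rho\wedge V=0$ in $\Wedge^3{\mathbb{W}}$.

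It remains to deduce simplicity from $\rho\wedge V=0$ with $\rho\neq0$. Here I would extend $\rho=e_1$ to a basis $e_1,e_2,\dots,e_6$ of ${\mathbb{W}}$ and expand $V=\sum_{i<j}V^{ij}e_i\wedge e_j$. The condition $e_1\wedge V=0$ annihilates precisely those terms of $V$ not already involving $e_1$, forcing $V^{ij}=0$ whenever $1\notin\{i,j\}$; hence $V=e_1\wedge\big(\sum_{j>1}V^{1j}e_j\big)=\rho\wedge\sigma$ with $\sigma:=\sum_{j>1}V^{1j}e_j$, which is exactly the assertion that $V$ is simple. I expect this last divisibility step to be the only part carrying genuine content, and so the one most worth stating carefully; everything preceding it is formal.

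As a consistency check one may note the equivalent phrasing through the rank of $V$ regarded as a skew map ${\mathbb{W}}^*\to{\mathbb{W}}$: writing $r$ for half that rank, one has $V^{\wedge r}\neq0$ and $V^{\wedge(r+1)}=0$, so $V\wedge V=0$ says precisely $r\le1$, that is, $V$ is zero or of rank two, and a skew form of rank two is decomposable. This reformulation also makes transparent that the argument uses the dimension of ${\mathbb{W}}$ nowhere, the value $6$ being relevant only to the application rather than to the statement itself.
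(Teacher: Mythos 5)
Your proof is correct, but it cannot be "the same approach as the paper" for the simple reason that the paper contains no proof of this statement at all: Theorem~\ref{strong_pluecker} is quoted with the converse attributed to the reference~\cite{EMi}, so your argument supplies what the paper delegates. What you give is the classical two-step proof and both steps check out: the contraction identity $\xi\lrcorner(V\wedge V)=2(\xi\lrcorner V)\wedge V$ is right (the signs work because $V$ has even degree and $(\xi\lrcorner V)\wedge V=V\wedge(\xi\lrcorner V)$), and the divisibility step --- that $\rho\wedge V=0$ with $\rho\neq 0$ forces $V=\rho\wedge\sigma$ --- is exactly Cartan's lemma, which your basis computation establishes cleanly, since the trivectors $e_1\wedge e_i\wedge e_j$ with $1<i<j$ are linearly independent. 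Your closing remark is also accurate and worth keeping: the statement and your proof are valid for $\Wedge^2{\mathbb{W}}$ in \emph{any} dimension, the value $\dim{\mathbb{W}}=6$ being relevant only to the application (where $\bigodot^2{\mathbb{R}}^3$ happens to be six-dimensional); the rank-two reformulation makes this transparent. The one thing to be aware of when comparing with~\cite{EMi} is that the interest of that paper lies in higher Grassmannians, where the naive quadratic relations of this kind are subtler; for bivectors, which is all that is needed here, your elementary argument is complete and self-contained, and arguably preferable to an external citation.
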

We may employ the theory of highest weights to classify the irreducible 
representations of ${\mathrm{SL}}(3,{\mathbb{R}})$. Specifically, we may 
follow the notation of \cite{beastwood} in writing the general such 
representation as
$$\oo{a}{b}\quad\mbox{for}\enskip a,b\in{\mathbb{Z}}_{\geq 0},$$
with $\oo{0}{1}$ being the defining representation on~${\mathbb{R}}^3$. Then 
$$\textstyle\bigodot^2{\mathbb{R}}^3=\oo{0}{2}\qquad
\Wedge^2\oo{0}{2}=\oo{1}{2}\qquad\Wedge^4\oo{0}{2}=\oo{2}{1}$$
and we may read (\ref{Q}) as $Q=V\wedge V$ for $V\in\oo{1}{2}$. {From} 
Theorem~\ref{strong_pluecker} we obtain the following.
\begin{lem}\label{key} With $Q_{ab}{}^c$ as in~\eqref{Q}, we have 
\begin{equation}\label{Vsimple}
Q_{ab}{}^c=0\iff V^{ab}{}_c=\rho^{d(a}\sigma^{b)e}\epsilon_{cde}.\end{equation}
\end{lem}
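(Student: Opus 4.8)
The plan is to leverage Theorem~\ref{strong_pluecker} together with the
representation-theoretic dictionary just established. The key observation is
that $Q=V\wedge V$ under the identifications
$\Wedge^2\smash{\oo{0}{2}}=\smash{\oo{1}{2}}$ and
$\Wedge^4\smash{\oo{0}{2}}=\smash{\oo{2}{1}}$, so that
$Q_{ab}{}^c=0$ is precisely the statement that $V$, viewed as an element of
$\Wedge^2\smash{\oo{0}{2}}$, is simple in the sense of Section~1. By
Theorem~\ref{strong_pluecker}, this is equivalent to $V=\rho\wedge\sigma$ for
some $\rho,\sigma\in\smash{\oo{0}{2}}=\bigodot^2{\mathbb{R}}^3$; that is, for
some symmetric tensors $\rho^{ab}$ and $\sigma^{ab}$.

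First I would make explicit the inclusion
$\smash{\oo{1}{2}}\hookrightarrow\Wedge^2\smash{\oo{0}{2}}$, realising the Weyl
tensor $V^{ab}{}_c$ inside the wedge square of symmetric $2$-tensors. The
natural map $\bigodot^2{\mathbb{R}}^3\otimes\bigodot^2{\mathbb{R}}^3\to
\smash{\oo{1}{2}}$ sending $\rho\otimes\sigma$ to the trace-free part of
$\rho^{d(a}\sigma^{b)e}\epsilon_{cde}$ is, up to scale, the unique
$\mathrm{SL}(3,{\mathbb{R}})$-equivariant projection onto the relevant
summand; its antisymmetrisation gives the wedge map
$\Wedge^2\smash{\oo{0}{2}}\to\smash{\oo{1}{2}}$. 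Thus the decomposable elements
$\rho\wedge\sigma$ correspond exactly to tensors of the form
$\rho^{d(a}\sigma^{b)e}\epsilon_{cde}$ (the antisymmetry in $\rho,\sigma$ being
automatic, since swapping them changes the sign). I would check that this
expression is indeed trace-free, so that it genuinely lands in
$\smash{\oo{1}{2}}$ and matches the domain of $V^{ab}{}_c$.

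Next I would assemble the equivalence. The forward direction is immediate:
if $Q_{ab}{}^c=0$, then $V$ is simple by Theorem~\ref{strong_pluecker}, hence
$V=\rho\wedge\sigma$, which under the identification above reads
$V^{ab}{}_c=\rho^{d(a}\sigma^{b)e}\epsilon_{cde}$. For the converse, if
$V^{ab}{}_c$ has this form, then $V$ is decomposable, so $V\wedge V=0$, i.e.
$Q_{ab}{}^c=0$; here one uses that $V\wedge V$ is exactly the image of $Q$
under $\Wedge^4\smash{\oo{0}{2}}=\smash{\oo{2}{1}}$.

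The main obstacle I anticipate is pinning down the equivariant maps and their
normalisations precisely enough that the abstract statement ``$V$ is simple''
translates into the concrete formula $\rho^{d(a}\sigma^{b)e}\epsilon_{cde}$
with no stray trace terms or scalar factors. In particular I would verify that
the trace-free projection implicit in the definition
$\smash{\oo{1}{2}}\subset\Wedge^2\smash{\oo{0}{2}}$ does not alter the stated
form, and that the two quadratic invariants---$Q$ as defined in~\eqref{Q} and
$V\wedge V$ as an element of $\Wedge^4\smash{\oo{0}{2}}$---agree under the
isomorphism $\Wedge^4\smash{\oo{0}{2}}=\smash{\oo{2}{1}}$ rather than merely
being proportional by an unknown constant. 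Once these identifications are
fixed, the lemma follows directly from Theorem~\ref{strong_pluecker}.
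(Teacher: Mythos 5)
Your proposal is correct and takes essentially the same route as the paper: the paper's one-line proof likewise reads $Q=V\wedge V$ and recognises $\rho^{d(a}\sigma^{b)e}\epsilon_{cde}$ as $\rho\wedge\sigma$ under the identification $\Wedge^2\oo{0}{2}=\oo{1}{2}$, invoking Schur's lemma exactly where you invoke uniqueness (up to scale) of the equivariant map, and then applies Theorem~\ref{strong_pluecker}. Your anticipated obstacle about pinning down scalar normalisations is harmless: any nonzero constants of proportionality can be absorbed into $\rho$, so proportionality alone already yields the stated equivalence.
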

\begin{proof} It suffices to recognise the right hand side of (\ref{Vsimple})
as $V=\rho\wedge\sigma$, which is immediate by Schur's lemma as soon as it is
non-zero.
\end{proof}
Indeed, a route to the vanishing of $Q_{ab}{}^c$ in~\cite{DE} was to show that
if (\ref{Vsigma}) holds and $\sigma^{ab}$ is invertible, then
\begin{equation}\label{Vrhosigma}
V^{ab}{}_c=\rho^{d(a}\sigma^{b)e}\epsilon_{cde}\end{equation}
for some symmetric contravariant $2$-tensor~$\rho^{ab}$.

In particular, if we define the {\em degree of mobility\/} of a metric to be
the dimension of the solution space of its associated metrisability equation,
then we obtain an immediate proof of the following theorem (due to Kiosak and
Mike\v{s}~\cite{K,KM}).

\begin{thm}\label{mobility} The degree of mobility of a three-dimensional
metric can only be $1$, $2$, or $10$.
\end{thm}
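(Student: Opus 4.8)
The plan is to identify the degrees of mobility $1$ and $2$ as the ``generic'' possibilities and $10$ as the ``flat'' one, and then to show that nothing lies in between. First I would record the standard fact that the prolongation of \eqref{metrisability} is a linear connection on a bundle of rank $6+3+1=10$ (the fibre recording $\sigma^{bc}$, its divergence $\nabla_d\sigma^{dc}$, and one further scalar); its parallel sections are exactly the solutions of \eqref{metrisability}, these are determined by their value at a single point, and so the solution space has dimension at most~$10$. As in~\cite{EM}, this connection is flat precisely when the projective structure is flat, i.e.\ when $V^{ab}{}_c\equiv0$, and then the degree of mobility attains its maximum~$10$. Everything therefore reduces to proving that a degree of mobility of at least~$3$ forces $V^{ab}{}_c\equiv0$.

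So assume $V^{ab}{}_c\not\equiv0$ and suppose, for a contradiction, that there are three linearly independent solutions. Because we are metrising, there is a non-degenerate solution $\sigma^{bc}$; hence $Q_{ab}{}^c=0$, and \eqref{Vrhosigma} together with Lemma~\ref{key} and Theorem~\ref{strong_pluecker} exhibit $V^{ab}{}_c=\rho^{d(a}\sigma^{b)e}\epsilon_{cde}$ as a simple element $\rho\wedge\sigma$. Let $\Pi=\langle\rho^{bc},\sigma^{bc}\rangle\subset\bigodot^2{\mathbb{R}}^3$ be the resulting $2$-plane. Every solution $\tau^{bc}$ obeys \eqref{Vsigma}, so pointwise it lies in $\mathcal{K}=\{\tau^{bc}:V^{(ab}{}_d\tau^{c)d}=0\}$, and a short computation exploiting the antisymmetry of $\epsilon_{cde}$ (which makes the relevant $\rho$-$\rho$-$\epsilon$ contraction skew in two of the symmetrised indices) shows $\Pi\subseteq\mathcal{K}$.

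The crux is then the following pointwise algebraic statement, valid wherever $V^{ab}{}_c\neq0$: \emph{if $\mathcal{K}$ contains a non-degenerate tensor then $\mathcal{K}=\Pi$ has dimension~$2$.} I would prove this by classifying $\mathcal{K}$ in terms of the pencil~$\Pi$: generically $\mathcal{K}=\Pi$, while $\dim\mathcal{K}$ can exceed~$2$ only when $\rho^{bc}$ and $\sigma^{bc}$ are supported on a common $2$-plane $U\subset{\mathbb{R}}^3$ (equivalently, annihilate a common covector), in which case $\mathcal{K}=\bigodot^2U$ and consists entirely of degenerate tensors. Since our non-degenerate $\sigma^{bc}$ lies in $\mathcal{K}$, this exceptional three-dimensional case is excluded and $\mathcal{K}=\Pi$ on the open set where $V^{ab}{}_c\neq0$. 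I expect this dichotomy---and in particular the role of non-degeneracy in ruling out $\bigodot^2U$---to be the main obstacle, since a purely dimensional bound on $\mathcal{K}$ is false.

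It remains to globalise. On the open set $\{V^{ab}{}_c\neq0\}$ all solutions now take values in the moving $2$-plane~$\Pi$. Two of the three given solutions are pointwise linearly independent on a dense open subset---were they proportional there, the first (proportionality) step in the proof of Lemma~\ref{constancy} would force a constant ratio, contradicting their independence as sections. Writing the third solution as a pointwise combination $f\sigma_1+g\sigma_2$ of these two and invoking the constancy part of Lemma~\ref{constancy}, the coefficients $f$ and $g$ must be constant, contradicting linear independence. Hence the degree of mobility cannot be~$3$ or more while $V^{ab}{}_c\not\equiv0$, and combined with the first paragraph the only possible values are $1$, $2$, and~$10$.
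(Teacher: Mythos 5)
Your proposal is correct and its skeleton is the same as the paper's: prolongation gives dimension $10$ in the projectively flat case, and otherwise the simple Weyl tensor $V=\rho\wedge\sigma$ confines every solution, pointwise, to a $2$-plane, after which Lemma~\ref{constancy} caps the dimension at~$2$. Where you genuinely add something is the middle step. The paper merely asserts that any solution of \eqref{metrisability} ``must be taken'' from ${\mathrm{span}}\{\rho^{ab},\sigma^{ab}\}$; this is immediate from \eqref{Vrhosigma} for non-degenerate solutions, but a degenerate solution is only known to satisfy \eqref{Vsigma}, i.e.\ to lie in your kernel ${\mathcal{K}}$, and you rightly observe that ${\mathcal{K}}$ can be strictly larger than the $2$-plane --- indeed $3$-dimensional, equal to $\bigodot^2U$, precisely when $\rho^{ab}$ and $\sigma^{ab}$ annihilate a common covector (the type-$W_1$ situation of Lemma~\ref{Loewy_Radwan}) --- so that non-degeneracy must genuinely be invoked. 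Your dichotomy is true, but you leave it as a classification to be carried out; note that it can be had with no case analysis at all: ${\mathcal{K}}$ is a linear subspace containing the non-degenerate $\sigma^{bc}$, so for any $\tau^{bc}\in{\mathcal{K}}$ the element $\tau^{bc}+t\sigma^{bc}$ lies in ${\mathcal{K}}$ and is non-degenerate for all but finitely many $t$ (its determinant is a cubic in $t$ with leading coefficient $\det\sigma\not=0$); by \cite[Theorem~1.3]{DE}, i.e.\ the implication \eqref{Vsigma}~$\Rightarrow$~\eqref{Vrhosigma} for invertible tensors, each such perturbation lies in the $2$-plane of the non-zero simple tensor $V$, whence $\tau^{bc}$ does too and ${\mathcal{K}}={\mathrm{span}}\{\rho^{ab},\sigma^{ab}\}$. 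With that lemma in place, your globalisation --- pointwise independence of two solutions on an open set, then constancy of the coefficients of a third via Lemma~\ref{constancy} and unique continuation from the prolongation --- is exactly how the paper concludes.
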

\begin{proof}
If $V_{ab}{}_c$ vanishes identically, then the structure is `projectively flat'
and prolongation shows that solution space of (\ref{metrisability}) may be
identified with $\oo{0}{2}\oplus\oo{0}{1}\oplus\oo{0}{0}$, which has dimension
$6+3+1=10$. Otherwise, the projectively invariant Weyl tensor $V^{ab}{}_c$ 
determines, by means of~(\ref{Vrhosigma}),
$${\mathrm{span}}\{\rho^{ab},\sigma^{ab}\}$$
from which any solution of the metrisability equation must be taken.
Lemma~\ref{constancy} completes the proof.
\end{proof}

Although not strictly relevant to the metrisability problem, similar arguments
bound the dimension of the solution space to (\ref{metrisability}) without
supposing a non-degenerate solution. These arguments yield
Theorem~\ref{degenerate_bound} below and, for its proof, we shall need the
following Lemmata. 
\begin{lem}\label{Loewy_Radwan} Suppose $W$ is a linear subspace of\/
$\bigodot^2{\mathbb{R}}^3$ consisting entirely of degenerate matrices. Then
$\dim W\leq 3$. Suppose $\dim W=3$. Then, generically, we can find a basis for
${\mathbb{R}}^3$ such that
\begin{equation}\tag{Type $W_2$}
W=\left\{\mbox{\footnotesize$\left[\begin{array}{ccc}
p&q&r\\ q&0&0\\ r&0&0\end{array}\right]$}\right\}\end{equation}
and, otherwise, find a basis so that
\begin{equation}\tag{Type $W_1$}
W=\left\{\mbox{\footnotesize$\left[\begin{array}{ccc}
p&q&0\\ q&r&0\\ 0&0&0\end{array}\right]$}\right\}.\end{equation}
\end{lem}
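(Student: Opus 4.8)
The plan is to reduce everything to a single normalised rank-two element of $W$ together with the vanishing of the determinant along every line through it. First I note that a linear space of symmetric matrices of rank at most one has dimension at most one, since two independent rank-one forms $vv^{\top}$ and $ww^{\top}$ span a pencil containing the rank-two form $vv^{\top}-ww^{\top}$; hence any $W$ with $\dim W\geq 2$ — in particular any $W$ with $\dim W=3$ — contains an element $S_0$ of rank exactly two. I would fix such an $S_0$, normalise its kernel to be $e_3$ by congruence, and then exploit that $S_0+tT$ lies in $W$, hence is singular, for every $T\in W$ and every $t\in\mathbb{R}$.

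The engine is the pencil identity $\det(S_0+tT)=\det S_0+t\,\mathrm{tr}(\mathrm{adj}(S_0)T)+t^2\,\mathrm{tr}(\mathrm{adj}(T)S_0)+t^3\det T$. Because this cubic in $t$ vanishes identically, each coefficient vanishes. Since $\mathrm{adj}(S_0)=c\,e_3e_3^{\top}$ with $c\neq 0$, the linear term immediately yields $e_3^{\top}Te_3=0$, i.e. the $(3,3)$-entry of every $T\in W$ is zero; geometrically, every form in $W$ vanishes at the kernel point of $S_0$. Writing a general $T\in W$ with this entry removed, the remaining two coefficients give relations whose shape depends only on the signature of the rank-two part of $S_0$.

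Here the argument splits, and this split is exactly the source of the two normal forms. If $S_0$ is definite on its range (absorbing an overall sign using $-S_0\in W$), normalised to $\mathrm{diag}(1,1,0)$, the quadratic coefficient is negative-definite in $T_{13},T_{23}$ and over $\mathbb{R}$ forces $T_{13}=T_{23}=0$; every $T$ is then supported on the top-left block, $W$ lies in a $3$-dimensional space, and at $\dim W=3$ this is Type $W_1$, with common kernel $e_3$. If instead every rank-two element of $W$ is indefinite, I normalise $S_0$ to the hyperbolic form with $(S_0)_{12}=(S_0)_{21}=1$ and all other entries zero; the quadratic coefficient then reads $T_{13}T_{23}=0$ and, using this, the cubic coefficient reduces to $T_{22}T_{13}^{2}+T_{11}T_{23}^{2}=0$. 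The key observation is that a \emph{linear} subspace contained in a union of two hyperplanes lies in one of them, which I apply twice: first to $\{T_{13}T_{23}=0\}$, so that after possibly swapping $e_1\leftrightarrow e_2$ (which preserves $S_0$) we have $T_{23}=0$ throughout $W$; then to the resulting $\{T_{22}T_{13}^{2}=0\}=\{T_{22}=0\}\cup\{T_{13}=0\}$. The branch $T_{13}=0$ again lands in the top-left block, while the branch $T_{22}=0$ lands exactly in Type $W_2$. Both ambient spaces are $3$-dimensional, so $\dim W\leq 3$ in every case.

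Finally, at $\dim W=3$ equality forces $W$ to be the full ambient normal form. The top-left-block branch gives $W_1$, which contains the definite element $\mathrm{diag}(1,1,0)$; so in the "all rank-two elements indefinite" case that branch is impossible and we necessarily obtain $W_2$. Thus the dichotomy is governed by a single intrinsic invariant: $W$ is of Type $W_1$ precisely when the forms in $W$ share a common kernel vector (equivalently, when $W$ contains a definite rank-two element), and of Type $W_2$ otherwise, the latter being the generic situation. The main obstacle I anticipate is not any single determinant expansion but the bookkeeping of the real signature: the definite/indefinite split is genuinely necessary, being exactly what separates $W_1$ from $W_2$, and the two successive "subspace inside a union of hyperplanes" reductions, together with the harmless coordinate swaps fixing the signs, must be carried out with care. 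This elementary route reproves, for the case $n=3$, the relevant part of the Loewy--Radwan classification of spaces of symmetric matrices of bounded rank.
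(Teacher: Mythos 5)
Your proof is correct, but it takes a genuinely different route from the paper, which in fact offers no argument at all: the paper's ``proof'' of Lemma~\ref{Loewy_Radwan} is a citation of Loewy and Radwan~\cite{LR}, to whom the terminology $W_1$, $W_2$ is credited. Your replacement is elementary and self-contained: extract a rank-two element $S_0$ (legitimate once $\dim W\geq 2$, by your observation that a linear space of rank-$\leq 1$ symmetric forms is at most a line), normalise $S_0$ by congruence, and force all coefficients of $\det(S_0+tT)$ to vanish via the adjugate expansion; the linear coefficient kills $T_{33}$, and the quadratic and cubic coefficients, combined with two applications of the fact that a linear subspace contained in a union of two hyperplanes lies in one of them, split $\{T_{13}T_{23}=0\}$ and then $\{T_{22}T_{13}^{2}=0\}$ into exactly the two normal forms. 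I checked the computations (in particular $\mathrm{adj}(T)_{11}+\mathrm{adj}(T)_{22}=-(T_{13}^2+T_{23}^2)$ in the semi-definite case, and $\det T=-T_{11}T_{23}^2+2T_{12}T_{13}T_{23}-T_{22}T_{13}^2$ in the hyperbolic case) and they are right; the definite/indefinite split is indeed where realness enters, as you say. Two presentational points: (i) your case division should be stated as ``some rank-two element of $W$ is semi-definite (take that one as $S_0$)'' versus ``every rank-two element of $W$ is indefinite''---as literally written, your first case is a property of the chosen $S_0$ rather than of $W$, though your closing paragraph shows you intend the correct dichotomy; (ii) rank-one symmetric matrices are $\pm vv^t$, so signs should be carried along, harmlessly. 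As for what each approach buys: the paper's citation buys brevity and the full generality of the Loewy--Radwan classification (all $n$, all rank bounds), while your argument buys a transparent proof for the only case the paper needs, plus an intrinsic characterisation that the paper never makes explicit---Type $W_1$ holds exactly when the forms in $W$ share a kernel vector (equivalently, when $W$ contains a semi-definite rank-two element), Type $W_2$ otherwise---which is precisely the invariant implicitly used later in the proof of Theorem~\ref{degenerate_bound} and in the discussion of Contatto's conjecture. The one claim you assert rather than prove is that $W_2$ is the ``generic'' type; since the paper's statement leaves that genericity equally unquantified, this is not a loss relative to the paper, but a sentence making the meaning precise would strengthen your write-up.
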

\begin{proof} This result was shown by Loewy and Radwan~\cite{LR} to whom the 
terminology $W_1$ and $W_2$ is also due.
\end{proof}

\begin{lem}\label{typeW1} In the terminology of the previous lemma, suppose 
$$\textstyle\rho^{bc},\sigma^{bc},\tau^{bc}
\in W_1\subset\bigodot^2{\mathbb{R}}^3$$ 
and that
\begin{equation}\label{thrice_pure_trace}
f_a\rho^{bc}+h_a\sigma^{bc}+g_a\tau^{bc}
=\delta_a{}^{(b}\kappa^{c)}\end{equation}
for some covectors $f_a,h_a,g_a$, and some vector $\kappa^c$. Then 
$\kappa^c=0$. 
\end{lem}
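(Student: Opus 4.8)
The plan is to exploit the one feature that distinguishes Type $W_1$ from an arbitrary family of symmetric matrices, namely that, in the chosen basis, every element of $W_1$ has vanishing third row and third column: concretely $\rho^{3c}=\rho^{b3}=0$, and likewise for $\sigma^{bc}$ and $\tau^{bc}$. The crucial point is that this property persists under linear combination, so that for \emph{any} values of the covector components the restriction of the left hand side of~\eqref{thrice_pure_trace} to the index value $a=3$ again lies in $W_1$. This is what will circumvent the fact that three covectors on ${\mathbb{R}}^3$ need have no common kernel.

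First I would set $a=3$ in~\eqref{thrice_pure_trace}. The left hand side then reads $f_3\rho^{bc}+h_3\sigma^{bc}+g_3\tau^{bc}$, a linear combination of elements of $W_1$ and hence itself an element of $W_1$; in particular its $(3,3)$, $(1,3)$ and $(2,3)$ entries all vanish. The right hand side becomes $\delta_3{}^{(b}\kappa^{c)}$, whose $(3,3)$ entry is $\kappa^3$, whose $(1,3)$ entry is $\tfrac12\kappa^1$, and whose $(2,3)$ entry is $\tfrac12\kappa^2$. Equating these three entries forces $\kappa^1=\kappa^2=\kappa^3=0$, as required.

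It is worth contrasting this with Lemma~\ref{algebraic_lemma}. There, with only two matrices, one could choose a frame in which $f_3=h_3=0$, annihilating the left hand side at $a=3$ directly. With three covectors $f_a,h_a,g_a$ that freedom is generically lost, since three generic covectors on ${\mathbb{R}}^3$ have trivial common kernel. The structural hypothesis $\rho^{bc},\sigma^{bc},\tau^{bc}\in W_1$ substitutes for the missing freedom: the $a=3$ slice of the left hand side is killed not because the covector components vanish but because $W_1$ consists of matrices with zero third row and column. I do not anticipate a genuine obstacle; the only things to get right are the bookkeeping of which entries of $\delta_3{}^{(b}\kappa^{c)}$ reproduce the components of $\kappa^c$, and the observation that $W_1$ is a linear subspace, hence closed under the combination appearing on the left.
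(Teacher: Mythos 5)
Your proof is correct and is essentially the paper's own argument: both exploit that, in the normalising basis of Lemma~\ref{Loewy_Radwan}, every matrix in $W_1$ has vanishing third row and column, so the components of~\eqref{thrice_pure_trace} with $a=3$ and one of $b,c$ equal to $3$ reduce to $0=\delta_3{}^{(3}\kappa^{c)}$, forcing $\kappa^c=0$. The paper states this in one compressed line; your version just spells out the same component bookkeeping.
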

\begin{proof}
Normalised as in Lemma~\ref{Loewy_Radwan}, it follows that
$0=\delta_3{}^{(3}\kappa^{c)}$, from which $\kappa^c=0$ is immediate.
\end{proof}

\begin{thm}\label{degenerate_bound} Suppose that the solution space to
\eqref{metrisability} consists entirely of degenerate tensors~$\sigma^{bc}$.
Then the dimension of this space is at most\/~$3$. 
\end{thm}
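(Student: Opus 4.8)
The plan is to work near a generic point and to compare the (finite-dimensional) solution space $\mathcal{S}$ of \eqref{metrisability} with its evaluation image. Since every $\sigma^{bc}\in\mathcal{S}$ is degenerate, the image $W_x\subseteq\bigodot^2\mathbb{R}^3$ of the evaluation $\sigma^{bc}\mapsto\sigma^{bc}(x)$ is a linear space of degenerate matrices, so $\dim W_x\le 3$ by Lemma~\ref{Loewy_Radwan}; let $d$ be its (generically constant) value. I would then pick solutions $\tau_1^{bc},\dots,\tau_d^{bc}$ whose values are pointwise linearly independent near $x$ and expand an arbitrary solution as $\sigma^{bc}=\sum_i f_i\tau_i^{bc}$ with smooth functions $f_i$. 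Feeding this into \eqref{metrisability} and using that each $\tau_i^{bc}$ is already a solution collapses the equation to $\sum_i(\nabla_a f_i)\tau_i^{bc}=\delta_a{}^{(b}\kappa^{c)}$ for some $\kappa^c$. If I can show $\kappa^c=0$, then pointwise linear independence forces each $\nabla_a f_i=0$, the $f_i$ are constant, and hence $\dim\mathcal{S}=d\le 3$.

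Establishing $\kappa^c=0$ is precisely Lemma~\ref{algebraic_lemma} when $d\le 2$, and Lemma~\ref{typeW1} when $d=3$ with $W_x$ of Type~$W_1$. The outstanding case $d=3$ with $W_x$ of Type~$W_2$ is the real obstacle, for there Lemma~\ref{typeW1} fails: one can arrange $\delta_a{}^{(b}\kappa^{c)}\in\Wedge^1\otimes W_2$ with $\kappa^c\ne 0$, so the algebra alone does not force the $f_i$ to be constant.

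To rule out Type~$W_2$ I would instead exploit the prolongation consequence \eqref{Vsigma}, which every solution satisfies regardless of its rank. Writing $W_2$ invariantly as $\{\ell^{(b}v^{c)}:v\in\mathbb{R}^3\}$ for the distinguished line $\langle\ell\rangle$ that it determines, the fact that \eqref{Vsigma} holds for every value $\sigma^{bc}=\ell^{(b}v^{c)}\in W_x$ reads $V^{(ab}{}_d\ell^{c)}v^d+V^{(ab}{}_d v^{c)}\ell^d=0$ for all $v$; putting $v=\ell$ shows $V^{ab}{}_d\ell^d=0$, after which the same relation for arbitrary $v$ gives $V^{ab}{}_c=0$ at $x$, and hence on a neighbourhood. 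But $V^{ab}{}_c\equiv 0$ makes the structure projectively flat, and the proof of Theorem~\ref{mobility} then produces a $10$-dimensional solution space whose generic evaluation image is all of $\bigodot^2\mathbb{R}^3$; this is incompatible with $\dim W_x=3$ and with $\mathcal{S}$ being all-degenerate. Thus Type~$W_2$ cannot occur, the previous paragraph applies, and $\dim\mathcal{S}\le 3$. The points needing most care are the passage between the pointwise statement of Lemma~\ref{Loewy_Radwan} and open conditions on $M$ (so that Type~$W_2$, and hence $V^{ab}{}_c=0$, really holds on an open set), together with the verification that \eqref{Vsigma} is available for every, possibly degenerate, solution.
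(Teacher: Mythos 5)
Your proof is correct, and its decisive step takes a genuinely different route from the paper's. Both arguments share the same skeleton: bound the pointwise evaluation image by Lemma~\ref{Loewy_Radwan}, dispose of the low-dimensional and Type~$W_1$ cases with Lemmata~\ref{algebraic_lemma} and~\ref{typeW1} (and you are right that the analogous algebraic lemma genuinely fails for Type~$W_2$, since $\delta_a{}^{(b}\ell^{c)}$ lies in $\Wedge^1\otimes W_2$ with $\kappa^c=\ell^c\neq0$), and then show that the Type~$W_2$ case forces $V^{ab}{}_c=0$ on an open set, whence projective flatness supplies non-degenerate local solutions, contradicting the all-degenerate hypothesis. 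The difference is how $V^{ab}{}_c=0$ is reached. The paper argues differentially: writing all solutions as $\theta^{(b}\phi^{c)}$, it derives the transport equations $(\nabla_a\theta^b)_\circ=-\omega_a\theta^b$ and $(\nabla_a\phi^b)_\circ=\omega_a\phi^b$, differentiates once more, and extracts $V^{ab}{}_c\phi^c=F^a\phi^b+F^b\phi^a$ with $F^a=\epsilon^{abc}\nabla_b\omega_c$; three independent solutions then give $V^{ab}{}_c=F^a\delta_c{}^b+F^b\delta_c{}^a$, and tracing over ${}^b{}_c$ kills~$V$. You instead invoke the integrability condition \eqref{Vsigma}, which—exactly as the paper states it, with no non-degeneracy hypothesis—holds for every solution of \eqref{metrisability}, and apply it pointwise to $W_x=\{\ell^{(b}v^{c)}\}$: setting $v=\ell$ gives $V^{ab}{}_d\ell^d=0$ (using that $S^{(ab}\ell^{c)}=0$ forces $S^{ab}=0$ for symmetric $S$ and $\ell\neq0$, which is easily checked in a frame with $\ell=e_1$), and then arbitrary $v$ gives $V^{ab}{}_c=0$. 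Your route is shorter and purely algebraic at the crucial point, at the cost of leaning on the quoted prolongation identity \eqref{Vsigma}; the paper's route is self-contained, deriving the curvature constraint directly from the $W_2$ geometry and exhibiting en route exactly how a family of Type~$W_2$ solutions restricts the Weyl tensor. You also make explicit two things the paper leaves implicit—the cases $\dim W_x\le2$, and the local constancy of the Loewy--Radwan type on the open set where $\dim W_x=3$ (legitimate, since the $W_1$- and $W_2$-families are disjoint closed subvarieties of the Grassmannian)—while your closing contradiction is the same one the paper intends.
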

\begin{proof} If $\rho^{bc},\sigma^{bc},\tau^{bc}$ are three linearly
independent solutions, pointwise of type~$W_1$, then, by
Lemma~\ref{Loewy_Radwan}, any other solution must be of the form 
$f\rho^{bc}+h\sigma^{bc}+g\tau^{bc}$, in which case
$$\big((\nabla_af)\rho^{bc}\big)_\circ+\big((\nabla_ah)\sigma^{bc}\big)_\circ
+\big((\nabla_ag)\tau^{bc}\big)_\circ=0$$
and Lemma~\ref{typeW1} implies that $f,h,g$ are all constant. We are left with
the possibility that somewhere on $M$, and hence in an open subset, we have
three linearly independent solutions, pointwise of type~$W_2$. In this case, it
follows from Lemma~\ref{Loewy_Radwan} that, locally, there is a non-vanishing
vector field~$\theta^b$, uniquely determined up to scale, so that all solutions
to (\ref{metrisability}) have the form $\sigma^{bc}=\theta^{(b}\phi^{c)}$ for
some other vector field~$\phi^c$. Fixing such a field~$\theta^b$, it follows by
simple linear algebra that, wherever $\theta^b$ and $\phi^c$ are pointwise
linearly independent,
$$(\nabla_a\theta^b)_\circ=-\omega_a\theta^b\quad\mbox{and}\quad
(\nabla_a\phi^b)_\circ=\omega_a\phi^b$$
for some uniquely determined $1$-form~$\omega_a$. Differentiating 
$(\nabla_a\phi^b)_\circ=\omega_a\phi^b$ once more and decomposing the result 
into its irreducible parts, we find that, in particular,
$$V^{ab}{}_c\phi^c=F^a\phi^b+F^b\phi^a,$$
where $F^a\equiv\epsilon^{abc}\nabla_b\omega_c$. Therefore, if there are three
linearly independent solutions of type~$W_2$, then
$$V^{ab}{}_c=F^a\delta_c{}^b+F^b\delta_c{}^a$$
and tracing over ${}^b{}_c$ shows that $V^{ab}{}_c=0$, in which case the
connection $\nabla_a$ is projectively flat and we have reached a contradiction.
\end{proof}

In fact, the proof of Theorem~\ref{degenerate_bound} shows that the only way
that the dimension of the solution space to (\ref{metrisability}) can be $3$ is
if all solutions are locally of type~$W_1$. It is shown in \cite{DE} that this
possibility is realised, by both the Egorov projective structure~\cite{E} and
also by another family of structures, the so-called `Newtonian' projective
structures. Contatto~\cite{C} has recently extended the Newtonian structures to
all higher dimensions~$n$, showing that, for these structures, all solutions to
the metrisability equation are degenerate and that the dimension of this space
is $n(n-1)/2$. Contatto conjectures that this is the maximal dimension for
degenerate solutions and proves this under the assumption that there is a
non-zero $1$-form $\omega_c$ such that $\sigma^{bc}\omega_c=0$ for all
solutions $\sigma^{bc}$ (in other words, that this space is pointwise
Loewy-Radwan~\cite{LR} type~$W_1$). (By contrast, if there is a non-degenerate 
solution, then the submaximal dimension of the solution space is 
smaller~\cite{FM,K,M}, namely $n(n-1)/2-(n-2)$.)

\section{Local solutions}
As already stated, the aim of this article is to find {\em local\/} solutions
to the metrisability equation~(\ref{metrisability}). The proof of
Theorem~\ref{mobility} provides a good illustration of how this works since it
is only where the Weyl tensor $V^{ab}{}_c$ is non-zero that (\ref{Vrhosigma})
determines the pencil
\begin{equation}\label{pencil}f\rho^{ab}+h\sigma^{ab}\end{equation}
of possible solutions to (\ref{metrisability}) for smooth functions $f$
and~$h$. If the connection $\nabla_a$ is projectively flat, then it is locally 
metrisable. Otherwise, we may restrict to the open subset of $M$ where 
$V^{ab}{}_c$ is non-zero. We shall have occasion to make similar restrictions 
concerning other manifestly open conditions and shall usually do so without 
comment.

\section{Extracting the scale of a solution}
The plan, in the remainder of this article, is to consider the pencil
(\ref{pencil}) determined by $V^{ab}{}_c$ and, where possible, extract from it 
a non-degenerate solution to the metrisability equation~(\ref{metrisability}). 
The ultimate step in such an extraction is to pin down the scale of a solution 
as follows (cf.~\cite{MT}).

\begin{thm}\label{scale} Suppose $\sigma^{bc}\in\Gamma(\oo{0}{2})$ is
non-degenerate. In order that $h\sigma^{bc}$ solve \eqref{metrisability} for
some smooth non-vanishing function $h$, it is firstly necessary that the
$1$-form $\omega_a\equiv\sigma_{bc}(\nabla_a\sigma^{bc})_\circ$ be exact, where
$\sigma_{bc}$ denotes the inverse of~$\sigma^{bc}$. Secondly, it is both
necessary and sufficient that
\begin{equation}\label{nec_and_suff}\textstyle
(\nabla_a\sigma^{bc}-\frac25\omega_a\sigma^{bc})_\circ=0.
\end{equation}
\end{thm}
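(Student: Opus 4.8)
The plan is to analyze what constraint the metrisability equation places on a conformal rescaling $h\sigma^{bc}$ of a fixed non-degenerate $\sigma^{bc}$. Writing $\tau^{bc}=h\sigma^{bc}$ and substituting into the metrisability equation, I would compute
$$(\nabla_a\tau^{bc})_\circ=h(\nabla_a\sigma^{bc})_\circ+\big((\nabla_ah)\sigma^{bc}\big)_\circ,$$
so requiring $\tau^{bc}$ to solve \eqref{metrisability} is equivalent to demanding
$$h(\nabla_a\sigma^{bc})_\circ+\big((\nabla_ah)\sigma^{bc}\big)_\circ=0.$$
Dividing by $h$ and setting $\psi_a\equiv\nabla_a(\log h)=(\nabla_ah)/h$, this says $(\nabla_a\sigma^{bc})_\circ=-(\psi_a\sigma^{bc})_\circ$. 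The whole problem thus reduces to determining when the given covector $\psi_a$ can be a logarithmic differential, i.e.\ exact, and to pinning down its value in terms of intrinsic data built from $\sigma^{bc}$.

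The key computational step is to contract the relation $(\nabla_a\sigma^{bc})_\circ=-(\psi_a\sigma^{bc})_\circ$ with the inverse metric $\sigma_{bc}$ to isolate $\psi_a$. First I would expand the trace-free part explicitly, $(\nabla_a\sigma^{bc})_\circ=\nabla_a\sigma^{bc}-\frac12\delta_a{}^{(b}\nabla_d\sigma^{c)d}$, and similarly for $(\psi_a\sigma^{bc})_\circ$. Contracting both sides against $\sigma_{bc}$ and using $\sigma_{bc}\sigma^{bc}=3$ together with $\sigma_{bc}\delta_a{}^{(b}X^{c)}=\sigma_{ac}X^c$, I expect the $\delta$-terms to produce a factor that leaves, after collecting the coefficient of $\psi_a$, precisely the relation $\omega_a=\tfrac52\psi_a$, where $\omega_a\equiv\sigma_{bc}(\nabla_a\sigma^{bc})_\circ$ is the stated $1$-form (the factor $\tfrac52$ arising from the interplay between the dimension $3$ and the symmetrisation in the trace-correction term). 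This immediately gives $\psi_a=\tfrac25\omega_a$, so that exactness of $\omega_a$ is forced by $\psi_a=\nabla_a(\log h)$ being a gradient, establishing the first necessary condition.

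For the second assertion, substituting $\psi_a=\tfrac25\omega_a$ back into $(\nabla_a\sigma^{bc})_\circ=-(\psi_a\sigma^{bc})_\circ$ yields exactly the claimed equation \eqref{nec_and_suff}; but I must verify that this holds as a genuine \emph{equivalence}, not merely as a consequence. Necessity is clear from the derivation. For sufficiency, I would argue that if \eqref{nec_and_suff} holds and $\omega_a$ is exact, say $\omega_a=\nabla_a\lambda$, then setting $h=e^{2\lambda/5}$ produces $\nabla_a(\log h)=\tfrac25\omega_a=\psi_a$, and reversing the computation shows $h\sigma^{bc}$ solves \eqref{metrisability}. The only subtlety is confirming that equation \eqref{nec_and_suff} already encodes the correct contracted trace relation, so that no independent verification of $\omega_a=\tfrac52\psi_a$ is needed at the sufficiency stage.

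The main obstacle I anticipate is the bookkeeping in the contraction against $\sigma_{bc}$: one must track the trace-free projector carefully in dimension $3$, since the symmetrised Kronecker term $\tfrac12\delta_a{}^{(b}\nabla_d\sigma^{c)d}$ contributes differently to $\omega_a$ than a naive count would suggest, and it is precisely this that fixes the numerical coefficient $\tfrac25$. Getting this constant right—rather than, say, $\tfrac12$ or $\tfrac13$—is the crux, and I would double-check it by testing against the trivial case where $\sigma^{bc}$ is itself a solution (so $\omega_a=0$ and $h$ is forced constant by Lemma~\ref{constancy}), which provides a consistency check on the whole scheme.
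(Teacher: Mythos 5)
Your route is the paper's own: expand $(\nabla_a(h\sigma^{bc}))_\circ=0$ by the Leibniz rule, contract with $\sigma_{bc}$ to isolate $\nabla_a\log h$ (the paper packages this as the identity $\sigma_{bc}(\theta_a\sigma^{bc})_\circ=\frac52\theta_a$), deduce exactness of $\omega_a$, and substitute back; you are in fact more explicit than the paper about the sufficiency direction, which it leaves implicit. Your anticipated mechanism for the factor $\frac52$ (the dimension $3$ minus $\frac12$ from the trace-correction term) is exactly right.

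The one genuine flaw is a sign, and as written it makes two of your intermediate claims inconsistent with each other. Contracting $(\nabla_a\sigma^{bc})_\circ=-(\psi_a\sigma^{bc})_\circ$ with $\sigma_{bc}$ gives $\omega_a=-\frac52\psi_a$, i.e.\ $\psi_a=-\frac25\omega_a$, not $\psi_a=+\frac25\omega_a$ as you state. With your sign, substituting back yields $(\nabla_a\sigma^{bc}+\frac25\omega_a\sigma^{bc})_\circ=0$, which is \emph{not} \eqref{nec_and_suff}; your claim that it ``yields exactly the claimed equation'' only holds with the corrected sign $\psi_a=-\frac25\omega_a$. The same slip propagates into your sufficiency step: given $\omega_a=\nabla_a\lambda$ you must take $h=e^{-2\lambda/5}$, so that $\nabla_a\log h=-\frac25\omega_a$ and the term $((\nabla_ah)\sigma^{bc})_\circ=-\frac25h(\omega_a\sigma^{bc})_\circ$ cancels $h(\nabla_a\sigma^{bc})_\circ$ via \eqref{nec_and_suff}; with $h=e^{+2\lambda/5}$ the purported solution fails to solve \eqref{metrisability}. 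This is a mechanical fix, not a missing idea--once the sign is corrected, your argument is sound and coincides with the paper's, and your consistency check via Lemma~\ref{constancy} remains valid.
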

\begin{proof} Let us note the following identity:
\begin{equation}\label{identity}\textstyle
\sigma_{bc}(\theta_a\sigma^{bc})_\circ
=\sigma_{bc}(\theta_a\sigma^{bc}-\frac12\delta_a{}^{(b}\theta_d\sigma^{c)d})
=\frac52\theta_a.
\end{equation}
Therefore, in case that $h\sigma^{ab}$ solves (\ref{metrisability}), 
$$(\nabla_a(h\sigma^{bc}))_\circ
=((\nabla_ah)\sigma^{bc})_\circ+h(\nabla_a\sigma^{bc})_\circ=0$$
implies that
\begin{equation}\label{dlogh}
\textstyle\frac52\nabla_ah+h\sigma_{bc}(\nabla_a\sigma^{bc})_\circ=0
\end{equation}
so $\omega_a=-\frac25\nabla_a\log h$ and is, therefore, exact. Substituting
back into (\ref{metrisability}) gives~(\ref{nec_and_suff}), as required.
\end{proof}

\section{Testing non-degeneracy}\label{mobilitytwo}
Assuming that the projective Weyl tensor $V^{ab}{}_c$ is non-zero and yet
$Q_{ab}{}^c=0$, as we may, Lemma~\ref{key} implies that 
$V^{ab}{}_c=\rho^{d(a}\sigma^{b)e}\epsilon_{cde}$ and that any such tensors 
$\rho^{bc}$ and $\sigma^{bc}$ are linearly independent. We can construct an
arbitrary linear combination of $\rho^{bc}$ and $\sigma^{bc}$ by the formula
\begin{equation}\label{formula}2T_{ad}V^{a(b}{}_e\epsilon^{c)de}
=T_{ad}\sigma^{ad}\rho^{bc}-T_{ad}\rho^{ad}\sigma^{bc}
=f\rho^{bc}+h\sigma^{bc}\end{equation}
for an arbitrary symmetric $2$-tensor $T_{ab}$. It may happen that all such
linear combinations are degenerate. If so, the projective structure defined by
$\nabla_a$ cannot be metrisable. Indeed, some examples of this phenomenon are
given in~\cite{DE}, especially the Egorov projective structure~\cite{E}. We 
may eliminate this possibility as follows.
\begin{thm}\label{generic}
In order that the projective structure defined by $\nabla_a$ with Weyl
curvature $V^{ab}{}_c$ be metrisable it is necessary that the composition
$$\begin{array}{rcl}\Gamma(\oo{2}{0})\ni T_{ab}
&\mapsto&T_{ad}V^{a(b}{}_e\epsilon^{c)de}
\in\Gamma(\oo{0}{2})\\
&&\qquad\begin{picture}(0,0)
\put(0,8){\rule{6pt}{.5pt}}
\put(0,0){$\downarrow$}
\end{picture}\\
&&\det(T_{ad}V^{a(b}{}_e\epsilon^{c)de}),
\end{array}$$
where
$\det(\tau^{bc})\equiv\tau^{ab}\tau^{cd}\tau^{ef}\epsilon_{ace}\epsilon_{bdf}$,
not vanish identically.
\end{thm}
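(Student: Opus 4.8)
The plan is to argue by contraposition: I shall show that if the composite determinant vanishes for every choice of $T_{ab}$, then the pencil of tensors through which every solution of (\ref{metrisability}) must pass contains no non-degenerate member, so that, by the metrisability theorem at the head of the article, $\nabla_a$ admits no metric in its projective class. The whole argument rests on formula~(\ref{formula}), which already identifies the value of the map as the combination $\tfrac12\big(f\rho^{bc}+h\sigma^{bc}\big)$ with $f=T_{ad}\sigma^{ad}$ and $h=-T_{ad}\rho^{ad}$.

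First I would record that, because $\rho^{bc}$ and $\sigma^{bc}$ are pointwise linearly independent (as noted just after Lemma~\ref{key}), the two linear functionals $T_{ab}\mapsto T_{ad}\sigma^{ad}$ and $T_{ab}\mapsto T_{ad}\rho^{ad}$ are independent and hence jointly surjective onto ${\mathbb{R}}^2$. Thus, as $T_{ab}$ ranges over all symmetric $2$-tensors at a point, the pair $(f,h)$ ranges over all of ${\mathbb{R}}^2$, and the image of the map in the statement is exactly the pencil ${\mathrm{span}}\{\rho^{bc},\sigma^{bc}\}$. Next I would appeal to the proof of Theorem~\ref{mobility}: metrisability provides a non-degenerate solution $\sigma_0^{bc}$ of (\ref{metrisability}), which by (\ref{Vsigma}) and (\ref{Vrhosigma}) lies in the very span ${\mathrm{span}}\{\rho^{bc},\sigma^{bc}\}$ that $V^{ab}{}_c$ determines through Lemma~\ref{key}. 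Choosing $T_{ab}$ to realise $\sigma_0^{bc}$ then makes $\det\big(T_{ad}V^{a(b}{}_e\epsilon^{c)de}\big)=\det(\sigma_0^{bc})\neq0$, and so the composition cannot vanish identically.

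This is really a matter of tracking inclusions, and I do not expect a substantive obstacle; the only point requiring a little care is the interplay between pointwise and section-level statements. Since $\rho^{bc}$ and $\sigma^{bc}$ are smooth and stay independent, a smooth $T_{ab}\in\Gamma(\oo{2}{0})$ realising the prescribed smooth solution $\sigma_0^{bc}$ can be selected, so nothing is sacrificed by phrasing the map on sections; conversely, for the contrapositive it suffices to read ``vanish identically'' pointwise, since a single point at which some member of the pencil is non-degenerate already defeats total degeneracy there. I would therefore organise the write-up around the one-line observation that the image of $T_{ab}\mapsto T_{ad}V^{a(b}{}_e\epsilon^{c)de}$ is precisely the pencil carrying all candidate solutions.
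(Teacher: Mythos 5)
Your proposal is correct and takes essentially the same route as the paper, whose ``proof'' of Theorem~\ref{generic} is the discussion immediately preceding it: formula~(\ref{formula}) identifies the image of the map $T_{ab}\mapsto T_{ad}V^{a(b}{}_e\epsilon^{c)de}$ with the pencil ${\mathrm{span}}\{\rho^{bc},\sigma^{bc}\}$, from which (by Lemma~\ref{key} and~(\ref{Vrhosigma}), as in the proof of Theorem~\ref{mobility}) every solution of~(\ref{metrisability}) must be drawn, so a non-degenerate solution forces the determinant not to vanish identically. You merely make explicit what the paper leaves implicit, namely the contrapositive phrasing and the surjectivity of $T_{ab}\mapsto(f,h)$ coming from the linear independence of $\rho^{bc}$ and~$\sigma^{bc}$.
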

With Theorem~\ref{generic} in place, we may use the formula (\ref{formula}) to
construct a rank $2$ sub-bundle of $\oo{0}{2}$ with the property that any
solution of (\ref{metrisability}) is necessarily a section of this bundle. It
remains to devise a test to determine whether a section
$f\rho^{bc}+h\sigma^{bc}$ of this sub-bundle can solve the metrisability
equation~(\ref{metrisability}). If, for some reason, we are reduced to sections
$h\sigma^{bc}$ of a non-degenerate line sub-bundle, then Theorem~\ref{scale} 
applies.

\section{Testing for solutions}
As in \S\ref{mobilitytwo}, we may construct from~$V^{ab}{}_c$, in accordance
with~(\ref{Vrhosigma}), two linearly independent symmetric contravariant
$2$-tensors $\rho^{bc}$ and $\sigma^{bc}$ with $\sigma^{bc}$ non-degenerate and
be assured that the solution to (\ref{metrisability}) that we seek is
necessarily from the pencil~(\ref{pencil}), i.e.~of the form
$f\rho^{bc}+h\sigma^{bc}$. It is shown by linear algebra in
Appendix~\ref{normal} that, in the generic case, we may suppose, without loss
of generality, that
\begin{equation}\label{WLG}
\begin{array}{l}
\bullet\enskip \sigma^{bc}\mbox{ is invertible},\\
\bullet\enskip \sigma_{bc}\rho^{bc}=0,\\[1pt]
\bullet\enskip \rho^{bc}\xi_b=0,\mbox{ for some smooth }\xi_b\not=0.
\end{array}
\end{equation}
Substituting $f\rho^{bc}+h\sigma^{bc}$ into~(\ref{metrisability}), we obtain
\begin{equation}\label{obtained}
((\nabla_af)\rho^{bc})_\circ+((\nabla_ah)\sigma^{bc})_\circ
+f(\nabla_a\rho^{bc})_\circ+h(\nabla_a\sigma^{bc})_\circ=0.\end{equation}
Set $\xi^a\equiv\sigma^{ab}\xi_b$. Contracting $\xi^a\sigma_{bc}$ into 
(\ref{obtained}) yields
$$\textstyle\frac52\xi^a\nabla_ah
+f\xi^a\sigma_{bc}(\nabla_a\rho^{bc})_\circ
+h\xi^a\sigma_{bc}(\nabla_a\sigma^{bc})_\circ=0.$$
Contracting $\xi^a\xi_b\xi_c$ into (\ref{obtained}) yields
$$\textstyle\frac12\xi^b\xi_b\xi^a\nabla_ah
+f\xi^a\xi_b\xi_c(\nabla_a\rho^{bc})_\circ
+h\xi^a\xi_b\xi_c(\nabla_a\sigma^{bc})_\circ=0.$$
We may eliminate $\xi^a\nabla_ah$ from these equations to obtain
$$f\big(\xi^d\xi_d\sigma_{bc}-5\xi_b\xi_c\big)
\xi^a(\nabla_a\rho^{bc})_\circ
+h\big(\xi^d\xi_d\sigma_{bc}-5\xi_b\xi_c\big)
\xi^a(\nabla_a\sigma^{bc})_\circ=0.$$
Notice that the smooth functions
\begin{equation}\label{phi_and_psi}\begin{array}{rcl}\phi
&\equiv&\big(\xi^d\xi_d\sigma_{bc}-5\xi_b\xi_c\big)
\xi^a(\nabla_a\rho^{bc})_\circ\\[4pt]
\psi&\equiv&\big(\xi^d\xi_d\sigma_{bc}-5\xi_b\xi_c\big)
\xi^a(\nabla_a\sigma^{bc})_\circ\end{array}\end{equation}
are completely determined by the normal form~(\ref{WLG}). We have proved 
the following result concerning the pencil (\ref{pencil}) determined 
by~$V^{ab}{}_c$. 
\begin{thm} Suppose $\rho^{ab}$ and $\sigma^{ab}$ are in normal
form~\eqref{WLG}. In order that $f\rho^{bc}+h\sigma^{bc}$ satisfy the
metrisability equation, it is necessary that 
\begin{equation}\label{f_versus_h}
f\phi+h\psi=0,
\end{equation}
where $\phi$ and $\psi$ are canonically determined by \eqref{WLG} according 
to~\eqref{phi_and_psi}.
\end{thm}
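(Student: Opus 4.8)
The plan is to read off the stated relation from the calculation set up above the statement, organised so that the derivatives of $f$ and $h$ are eliminated. First I would substitute the pencil $f\rho^{bc}+h\sigma^{bc}$ into the metrisability equation to arrive at~\eqref{obtained}, whose four terms carry, respectively, $\nabla_a f$, $\nabla_a h$, and the canonically determined quantities $(\nabla_a\rho^{bc})_\circ$ and $(\nabla_a\sigma^{bc})_\circ$. The strategy is then to contract~\eqref{obtained} against two independent trilinear expressions built from~$\xi$, namely $\xi^a\sigma_{bc}$ and $\xi^a\xi_b\xi_c$, so as to obtain two scalar equations.

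The crucial point is that the normal form~\eqref{WLG} is engineered precisely so that every term involving $\nabla_a f$ disappears under both contractions. Indeed, since $\rho^{bc}\xi_b=0$ we have $\xi_b\xi_c\rho^{bc}=0$, and since additionally $\sigma_{bc}\rho^{bc}=0$, a short check from the definition of the trace-free part shows that both $\xi^a\sigma_{bc}\big((\nabla_a f)\rho^{bc}\big)_\circ$ and $\xi^a\xi_b\xi_c\big((\nabla_a f)\rho^{bc}\big)_\circ$ vanish. The $\nabla_a h$ terms, by contrast, are controlled by the identity~\eqref{identity}: the $\xi^a\sigma_{bc}$ contraction of $\big((\nabla_a h)\sigma^{bc}\big)_\circ$ yields $\tfrac52\,\xi^a\nabla_a h$, while the $\xi^a\xi_b\xi_c$ contraction yields $\tfrac12\,\xi^d\xi_d\,\xi^a\nabla_a h$ once one uses $\sigma^{bc}\xi_b\xi_c=\xi^d\xi_d$. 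Thus each of the two scalar equations is linear in the \emph{single} derivative quantity $\xi^a\nabla_a h$, with all remaining terms assembled from $\rho$, $\sigma$, $\xi$ and their covariant derivatives.

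It then remains only to eliminate $\xi^a\nabla_a h$ between the two equations: solving the first for $\xi^a\nabla_a h$ and substituting into the second clears this derivative, and grouping the coefficients of $f$ and of $h$ produces exactly
\[
f\big(\xi^d\xi_d\,\sigma_{bc}-5\xi_b\xi_c\big)\xi^a(\nabla_a\rho^{bc})_\circ
+h\big(\xi^d\xi_d\,\sigma_{bc}-5\xi_b\xi_c\big)\xi^a(\nabla_a\sigma^{bc})_\circ=0,
\]
which is~\eqref{f_versus_h} with $\phi$ and $\psi$ read off as in~\eqref{phi_and_psi}. The only real obstacle is the index bookkeeping of the trace-free projections: one must verify carefully that the $\nabla_a f$ contributions cancel identically and that the $\nabla_a h$ contributions collapse into scalar multiples of $\xi^a\nabla_a h$. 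Everything after that is the purely formal elimination just described, and since $\phi$ and $\psi$ depend only on the normal-form data~\eqref{WLG}, the resulting necessary condition $f\phi+h\psi=0$ is genuinely canonical.
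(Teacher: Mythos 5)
Your proposal is correct and follows essentially the same route as the paper: substitute the pencil into the metrisability equation, contract the result against $\xi^a\sigma_{bc}$ and $\xi^a\xi_b\xi_c$, and eliminate $\xi^a\nabla_ah$ to obtain $f\phi+h\psi=0$. Your explicit verification that the normal form \eqref{WLG} annihilates the $\nabla_af$ contributions under both contractions is a detail the paper leaves implicit, but the argument is the same.
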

\begin{cor}\label{punchline}
Wherever one of $\phi$ or $\psi$ defined by \eqref{phi_and_psi} is non-zero, we
may determine whether the projective structure is metrisable.
\end{cor}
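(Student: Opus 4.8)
The plan is to use the necessary condition~\eqref{f_versus_h} to collapse the pencil~\eqref{pencil} to a single direction, and then to invoke Theorem~\ref{scale} to pin down the scale. Since $\rho^{bc}$ and $\sigma^{bc}$ are pointwise linearly independent (Lemma~\ref{key}), wherever one of $\phi$ or $\psi$ is non-zero the relation $f\phi+h\psi=0$ forces $(f,h)$ to be a non-zero scalar multiple of $(\psi,-\phi)$. Hence every candidate solution is a scalar multiple of the canonically determined tensor
$$\tau^{bc}\equiv\psi\rho^{bc}-\phi\sigma^{bc},$$
which is non-zero (again by linear independence, since $\psi\rho^{bc}=\phi\sigma^{bc}$ would force $\phi=\psi=0$) and is constructed explicitly from the projective data through~\eqref{Vrhosigma} and~\eqref{phi_and_psi}.

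Next I would split into two cases according to the rank of $\tau^{bc}$. If $\det(\tau^{bc})$ vanishes identically, then the forced direction is everywhere degenerate; since all solutions of~\eqref{metrisability} lie in the pencil~\eqref{pencil} (Lemma~\ref{key}, cf.\ the proof of Theorem~\ref{mobility}) and their ratio is now determined, there can be no non-degenerate solution and the structure fails to be metrisable. Otherwise I would restrict to the open subset where $\tau^{bc}$ is non-degenerate, on which the only surviving freedom is an overall scale function~$g$.

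On this open set the question of metrisability becomes exactly whether $g\tau^{bc}$ solves~\eqref{metrisability} for some non-vanishing $g$, and this is precisely what Theorem~\ref{scale} decides: forming $\omega_a\equiv\tau_{bc}(\nabla_a\tau^{bc})_\circ$ with $\tau_{bc}$ the inverse of~$\tau^{bc}$, the structure is metrisable in this direction if and only if $\omega_a$ is exact and $(\nabla_a\tau^{bc}-\frac25\omega_a\tau^{bc})_\circ=0$. Both are effectively checkable conditions on the given data, so assembling the two cases yields a genuine decision procedure, which is what the corollary asserts.

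The step I expect to be the main obstacle is not any single computation but the verification that the forced direction $\tau^{bc}$ genuinely exhausts the possible solutions; this rests on Lemma~\ref{key}, which confines all solutions to the span of $\rho^{bc}$ and $\sigma^{bc}$, together with the necessity of~\eqref{f_versus_h}, which fixes their ratio. Once this confinement is secured, the reduction to the single scalar unknown $g$ and the appeal to Theorem~\ref{scale} are routine.
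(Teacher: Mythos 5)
Your proposal is correct and takes essentially the same approach as the paper: use the necessary condition \eqref{f_versus_h} to collapse the pencil \eqref{pencil} to a single canonically determined direction, declare the structure non-metrisable wherever that direction is degenerate, and otherwise invoke Theorem~\ref{scale} to decide the scale. Your tensor $\tau^{bc}=\psi\rho^{bc}-\phi\sigma^{bc}$ simply packages into one expression the paper's two cases, namely $\phi\not=0$, where the candidate is $h\big(\sigma^{ab}-(\psi/\phi)\rho^{ab}\big)$, and $\phi\equiv 0$ with $\psi\not=0$, where the candidate $f\rho^{ab}$ is automatically singular by the normal form \eqref{WLG}.
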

\begin{proof}
On $\{\phi\not=0\}$ we may write $f=-h\psi/\phi$ and conclude that the 
purported solution of (\ref{metrisability}) has the form
\begin{equation}\label{purported}
h\big(\sigma^{ab}-(\psi/\phi)\rho^{ab}\big).\end{equation}
If $\sigma^{ab}-(\psi/\phi)\rho^{ab}$ is singular, then the projective 
structure cannot be metrisable. Otherwise, we may invoke Theorem~\ref{scale} 
to decide the matter. 

If $\phi$ vanishes identically, then $h\psi=0$ and on $\{\psi\not=0\}$ we
conclude that $h=0$. In this case our purported solution of 
(\ref{metrisability}) is $f\rho^{ab}$. This is singular so our projective 
structure is not metrisable.
\end{proof}
\begin{cor} We have solved the metrisability problem for generic 
three-dimensional projective structures.
\end{cor}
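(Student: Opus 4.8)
The plan is to assemble the chain of results established above into a single decision procedure that, for a generic connection $\nabla_a$, returns a definitive answer to the metrisability question. Here ``generic'' is to be understood precisely in the sense of Corollary~\ref{punchline}, namely that one of the canonically determined functions $\phi$ or $\psi$ of~\eqref{phi_and_psi} be non-vanishing on the open set under consideration.

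First I would dispose of the degenerate possibilities. If the projective Weyl tensor $V^{ab}{}_c$ vanishes identically, then $\nabla_a$ is projectively flat and hence locally metrisable, as observed in the proof of Theorem~\ref{mobility}. Otherwise I would restrict attention to the open subset on which $V^{ab}{}_c\not=0$ and examine the primary obstruction $Q_{ab}{}^c$ of~\eqref{Q}. Wherever $Q_{ab}{}^c$ fails to vanish, no non-degenerate solution of~\eqref{metrisability} can exist by~\cite[Theorem~1.3]{DE}, so the structure is not metrisable there.

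On the locus where $Q_{ab}{}^c=0$, Lemma~\ref{key} lets me factor $V^{ab}{}_c=\rho^{d(a}\sigma^{b)e}\epsilon_{cde}$, thereby pinning down the pencil~\eqref{pencil} from which any solution must be drawn. Invoking Theorem~\ref{generic}, whose hypothesis is a manifestly open genericity condition, I may arrange the representatives $\rho^{bc}$ and $\sigma^{bc}$ into the normal form~\eqref{WLG} of Appendix~\ref{normal}, with $\sigma^{bc}$ non-degenerate. Substituting the pencil $f\rho^{bc}+h\sigma^{bc}$ into~\eqref{metrisability} and contracting with the covectors built from $\xi_b$ then produces the functions $\phi$ and $\psi$ of~\eqref{phi_and_psi} together with the necessary relation $f\phi+h\psi=0$.

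Finally I would apply Corollary~\ref{punchline} pointwise on the locus $\{\phi\not=0\}\cup\{\psi\not=0\}$. Where $\phi\not=0$ the scale is forced and the candidate~\eqref{purported} is either singular---so the structure is not metrisable---or non-degenerate, in which case Theorem~\ref{scale} settles the question outright; where $\phi\equiv0$ but $\psi\not=0$ the only candidate is the singular tensor $f\rho^{ab}$, again ruling out metrisability. Thus on the generic locus the procedure always terminates with a decision, which is exactly the content of the corollary. The one conceptual subtlety to flag is that ``generic'' is not a single open dense condition but precisely the requirement that $\phi$ or $\psi$ be non-zero, so that the extraction of the scale does not stall; off this locus the method is silent, and it is there that any sharper treatment of the metrisability problem would have to begin.
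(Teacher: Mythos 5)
There is a genuine gap. Your proposal treats the corollary as a recapitulation: you assemble Lemma~\ref{key}, Theorem~\ref{generic}, the normal form~\eqref{WLG}, and Corollary~\ref{punchline} into a decision procedure, and then \emph{define} ``generic'' to mean that $\phi$ or $\psi$ is non-vanishing. But that makes the statement nearly circular, and---worse---potentially vacuous. The point the paper's proof addresses is precisely the one you flag at the end and then set aside: for the word ``generic'' to carry any content, one must show that the condition $\phi\not\equiv 0$ is actually satisfied by some (and hence, by openness, by a generic set of) projective structures. A priori it is conceivable that some hidden identity forces $\phi$ and $\psi$ to vanish identically for \emph{every} projective structure with $Q_{ab}{}^c=0$, in which case the algorithm of Corollary~\ref{punchline} would never apply and the corollary would assert nothing.

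The paper's proof therefore consists of two observations that are absent from your argument: first, that the stipulation ``$\phi$ does not vanish'' is a manifestly open condition on finitely many jets of the projective structure; second---and this is the substantive step---that there exists at least one structure for which $\phi$ does not vanish identically, so the condition is non-vacuous. Establishing the second point is the entire purpose of the worked example in the following section of the paper, where an explicit connection is produced with $\phi=-4x^3/\bigl((1+x^2)^9(xy+z)^2z^2\bigr)\not\equiv 0$ and the algorithm is run to completion. Your write-up never invokes (or reconstructs) such an example, so it does not prove that the class of structures to which the method applies is generic rather than empty.
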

\begin{proof} It remains to show that $\phi$ does not vanish for a generic
metrisable structure. This stipulation is manifestly open on the suitably many
jets of a projective structure and it remains to check that there is at least 
one structure for which $\phi$ does not vanish identically. In other words, we 
need just one non-trivial example where our algorithm succeeds. Such an 
example is given in the following section.   
\end{proof}

\section{An example} In this section we carry out in detail our algorithm for
finding a metric in the projective class of a given connection. Consider the
torsion-free connection given in local co\"ordinates $(x^1,x^2,x^3)=(x,y,z)$ by
$$\nabla_aX^c=\partial_aX^c+\Gamma_{ab}{}^cX^b,$$
where $\partial_a\equiv\partial/\partial x^a$ and
$$\begin{array}{c}\Gamma_{ab}{}^1=\mbox{\small$\left[\!\begin{array}{ccc}
\displaystyle\frac{y+4xz+5x^2y}{(xy+z)(1+x^2)}
&\displaystyle\frac{x}{xy+z}
&\displaystyle\frac{xy+2z}{(xy+z)z}\\[14pt]
\displaystyle\frac{x}{xy+z}&0&0\\ [14pt]
\displaystyle\frac{xy+2z}{(xy+z)z}&0&0\end{array}\!\right]$},\\ \\
\Gamma_{ab}{}^2=\mbox{\small$\left[\!\begin{array}{ccc}
\displaystyle-(xy+z)xz^2
&\displaystyle\frac{2x}{1+x^2}&0\\[14pt]
\displaystyle\frac{2x}{1+x^2}&0&0\\ [14pt]
0&0&0\end{array}\!\right]$},\\ \\
\Gamma_{ab}{}^3=\mbox{\small$\left[\!\begin{array}{ccc}
\displaystyle-(xy+z)(xy+2z)z
&0&\displaystyle\frac{2x}{1+x^2}\\[14pt]
0&0&0\\ [14pt]
\displaystyle\frac{2x}{1+x^2}&0&0\end{array}\!\right]$}.\end{array}$$
If we take $\epsilon_{abc}$ to be the volume form with
$$\epsilon_{123}=(1+x^2)^4(xy+z)z,$$
then $\nabla_a\epsilon_{bcd}=0$ and a straightforward
computation gives
$$\begin{array}{c}V^{ab}{}_1=
\displaystyle\frac1{(1+x^2)^4}\mbox{\small$\left[\!\begin{array}{ccc}
0&0&0\\[4pt]
0&-2x&-2\\[4pt]
0&-2&2x\end{array}\!\right]$},\\ \\
V^{ab}{}_2=\displaystyle\frac1{(xy+z)^2(1+x^2)^4z^2}
\mbox{\small$\left[\!\begin{array}{ccc}
0&x&0\\[4pt]
x&0&0\\[4pt]
0&0&0\end{array}\!\right]$},\\ \\
V^{ab}{}_3=\displaystyle\frac1{(xy+z)^2(1+x^2)^4z^2}
\mbox{\small$\left[\!\begin{array}{ccc}
0&2&-x\\[4pt]
2&0&0\\[4pt]
-x&0&0\end{array}\!\right]$},\end{array}$$
for the Weyl tensor $V^{ab}{}_c$ defined by~(\ref{V}). One can check that
$Q_{ab}{}^c$ defined by (\ref{Q}) vanishes and therefore, in accordance with
Lemma~\ref{key}, we may write $V^{ab}{}_c=\rho^{d(a}\sigma^{b)e}\epsilon_{cde}$
for symmetric tensors $\rho^{ab}$ and $\sigma^{ab}$. Indeed, one can verify
that
$$\rho^{ab}=\frac1{(xy+z)^3(1+x^2)^4z^3}
\mbox{\small$\left[\begin{array}{ccc}
\displaystyle\frac4{(xy+z)^2z^2}&0&0\\[10pt] 
0&0&2x\\[4pt]
0&2x&4\end{array}\right]$}$$
and
$$\sigma^{ab}=\frac1{(1+x^2)^4}\mbox{\small$\left[\begin{array}{ccc}
1&0&0\\[4pt] 0&(xy+z)^2z^2&0\\[4pt] 0&0&(xy+z)^2z^2\end{array}\right]$}$$
will do. From the pencil~(\ref{pencil}), the combination 
$$\tilde\rho^{ab}=
\frac2{(xy+z)^2z^2}\sigma^{ab}
-\frac{(xy+z)^3z^3}2\rho^{ab}
=\frac1{(1+x^2)^4}\mbox{\small$\left[\begin{array}{ccc}
0&0&0\\[4pt] 0&2&-x\\[4pt] 0&-x&0\end{array}\right]$}$$
is singular and then 
$\tilde\sigma^{ab}=2(2+x^2)\sigma^{ab}-(xy+z)^5z^5\rho^{ab}$ gives
$$\tilde\sigma^{ab}
=\frac2{(1+x^2)^4}\mbox{\small$\left[\begin{array}{ccc}
x^2&0&0\\[4pt] 0&(2+x^2)(xy+z)^2z^2&-x(xy+z)^2z^2\\[4pt] 
0&-x(xy+z)^2z^2&x^2(xy+z)^2z^2\end{array}\right]$}$$
so that $\tilde\sigma_{ab}\tilde\rho^{ab}=0$. So now we have
$\tilde\sigma^{ab}$ and $\tilde\rho^{ab}$ from the pencil in the required
normal form (\ref{WLG}) and we may choose $\xi_a=[1,0,0]$ to compute the smooth
functions $\phi$ and $\psi$ from (\ref{phi_and_psi}). It turns out that
$$\phi=-\frac{4x^3}{(1+x^2)^9(xy+z)^2z^2}\quad\mbox{and}\quad
\psi=-\frac{8x^3}{(1+x^2)^9}$$
so then, according to~(\ref{purported}), if there is to be any solution of the 
metrisability equation~(\ref{metrisability}), then it must be of the form 
$h\hat\sigma^{ab}$, where   
$$\hat\sigma^{ab}=\tilde\sigma^{ab}-(\psi/\phi)\tilde\rho^{ab}
=\frac{2x^2}{(1+x^2)^4}
\mbox{\small$\left[\begin{array}{ccc}
1&0&0\\[4pt] 0&(xy+z)^2z^2&0\\[4pt] 0&0&(xy+z^2)z^2\end{array}\right]$}.$$
Theorem~\ref{scale} now applies and we should compute
$$\hat\sigma_{bc}(\nabla_a\hat\sigma^{bc})_\circ=
5\left(\frac{2xy+z-x^2z}{x(1+x^2)(xy+z)}\,dx
+\frac{x}{xy+z}\,dy+\frac{xy+2z}{(xy+z)z}\,dz
\right),$$
which is 
$$\nabla_a\big(5\log\Big(\frac{x(xy+z)z}{1+x^2}\Big)\big),$$
as required for a solution to~(\ref{metrisability}). Indeed, we conclude from 
(\ref{dlogh}) that, in order for $h\hat\sigma^{ab}$ to 
solve~(\ref{metrisability}), it must be that 
$$\textstyle5\nabla_ah+2h\hat\sigma_{bc}(\nabla_a\hat\sigma^{bc})_\circ=0,$$
whence
$$\nabla_a\log h+2\nabla_a\big(\log\Big(\frac{x(xy+z)z}{1+x^2}\Big)\big)=0$$
and so we may take
$$h=\left(\frac{1+x^2}{x(xy+z)z}\right)^2.$$
The upshot of all this is that, if there is to be a solution
to~(\ref{metrisability}), then up to an overall constant, it must be 
$$\sigma^{ab}
=\frac{1}{(1+x^2)^2}\mbox{\small$\left[\begin{array}{ccc}
\displaystyle\frac{1}{(xy+z)^2z^2}&0&0\\[10pt] 0&1&0\\[4pt]
0&0&1\end{array}\right]$}.$$
One easily verifies that this is, indeed, a solution from which it follows
that this example of a projective structure is metrisable. In fact, up to an
overall constant, the metric in question is
$$g_{ab}=\frac{\sigma_{ab}}{\det\sigma}=\frac{\sigma_{ab}}
{\epsilon_{cde}\epsilon_{pqr}\sigma^{cp}\sigma^{dq}\sigma^{dr}}
=\frac16\mbox{\small$\left[\begin{array}{ccc}
\displaystyle{(xy+z)^2z^2}&0&0\\ 0&1&0\\
0&0&1\end{array}\right]$}.$$

\appendix
\section{Normal forms}\label{normal}
Our aim here is to find a local normal form for a pair of elements from a
non-singular pencil $\{f\rho^{ab}+h\sigma^{ab}\}$ of symmetric contravariant
$2$-tensors on a smooth $3$-manifold, where {\em non-singular\/} means that one
element, say~$\sigma^{ab}$, from the pencil is non-singular. We are asking only
for preferred frames: it is a question of linear algebra concerning pencils of
symmetric matrices. For $n\times n$ matrices, suitable normal forms may be
found in~\cite{U}. In what follows we provide a more direct analysis for
$3\times 3$ matrices. As regards their application to the metrisability
problem, we shall need only the consequence that, generically, we may normalise
$\rho^{ab}$ and $\sigma^{ab}$ so that the conditions (\ref{WLG}) hold. More
precisely, we shall find that, generically and up to scale, there is exactly
one possible choice or exactly three possible choices for a degenerate
$\rho^{ab}$ from the pencil and, for each such choice, up to scale, just one
non-singular $\sigma^{ab}$ from the pencil such that $\sigma_{ab}\rho^{ab}=0$
and, up to scale, just one non-zero $\xi_a$ such that $\rho^{ab}\xi_a=0$.
Moreover, the construction is effective (since finding eigenvectors of a
$3\times 3$ matrix entails solving only a cubic polynomial) and the normalised
($\rho^{ab},\sigma^{ab},\xi_a$) may be chosen to depend smoothly on the base
manifold (since, generically, the roots of a polynomial depend smoothly on its
coefficients). For the remainder of this appendix we shall work with matrices
rather than tensors. In these terms, it is Lemma~\ref{generic_lemma} below that
is needed for the normalisation~(\ref{WLG}).

Suppose $H$ and $N$ are real symmetric $3\times 3$ matrices. If $H$ is
definite, then it is well-known that $N$ may be orthogonally diagonalised with
respect to~$H$. More precisely, it means that we may find 
$A\in{\mathrm{GL}}(3,{\mathbb{R}})$ such that
\begin{equation}\label{orthogonally_diagonalise}
A^tHA=\pm\mbox{\footnotesize$\left[\begin{array}{ccc}
1&0&0\\ 0&1&0\\ 0&0&1\end{array}\right]$}
\quad\mbox{and}\quad
A^tNA=\mbox{\footnotesize$\left[\begin{array}{ccc}
\lambda&0&0\\ 0&\mu&0\\ 0&0&\nu\end{array}\right]$}.\end{equation}
The following lemma deals with the case that $H$ is indefinite. 
\begin{lem}\label{indefinite_normal_forms}
Suppose $H$ and $N$ are real symmetric $3\times 3$ matrices with $H$
non-degenerate but indefinite. Then we may find
$A\in{\mathrm{GL}}(3,{\mathbb{R}})$ such that
$$A^tHA=\pm\mbox{\footnotesize$\left[\begin{array}{ccc}
1&0&0\\ 0&1&0\\ 0&0&-1\end{array}\right]$}$$
and
$$\begin{array}{rcl}A^tNA&=&\mbox{\footnotesize$\left[\begin{array}{ccc}
\lambda&0&0\\ 0&\mu&0\\ 0&0&\nu\end{array}\right]$}
\quad\mbox{or}\quad
\mbox{\footnotesize$\left[\begin{array}{ccc}
\lambda&0&0\\ 0&\alpha&\beta\\ 0&\beta&-\alpha\end{array}\right]$}\\[18pt]
&&\mbox{or}\quad
\mbox{\footnotesize$\left[\begin{array}{ccc}
1+\lambda&0&-1\\0&\mu&0\\ -1&0&1-\lambda\end{array}\right]$}
\quad\mbox{or}\quad
\mbox{\footnotesize$\left[\begin{array}{ccc}
\lambda&1&0\\ 1&\lambda&-1\\ 0&-1&-\lambda\end{array}\right]$},
\end{array}$$
these four possibilities being mutually exclusive.
\end{lem}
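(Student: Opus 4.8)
The plan is to transfer the problem to the classification of a single operator and then sort by its Jordan structure. Since $H$ is non-degenerate, set $M\equiv H^{-1}N$. Symmetry of $N$ is precisely the statement $M^tH=HM$, i.e.\ that $M$ is self-adjoint for the bilinear form $H$, and under congruence $(H,N)\mapsto(A^tHA,A^tNA)$ the operator transforms by conjugation, $M\mapsto A^{-1}MA$. Thus, after using the indefiniteness of $H$ to normalise it to $\pm\,\mathrm{diag}(1,1,-1)$ (the overall sign recording signature $(2,1)$ versus $(1,2)$), the only remaining freedom is conjugation of $M$ by the orthogonal group $O(2,1)$ of this standard form. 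The lemma therefore reduces to listing the $H$-self-adjoint operators $M$ up to $O(2,1)$-conjugacy, and each of the four displayed shapes for $A^tNA$ is simply $HM$ written out in a suitable frame.

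First I would dispose of the case that $M$ is diagonalisable over $\mathbb R$. Eigenvectors belonging to distinct eigenvalues are automatically $H$-orthogonal, since $\lambda\,H(v,w)=H(Mv,w)=H(v,Mw)=\mu\,H(v,w)$; hence the eigenspace decomposition is $H$-orthogonal and, $H$ being globally non-degenerate, each eigenspace is $H$-non-degenerate. Choosing an $H$-orthonormal basis adapted to this decomposition (which diagonalises $M$ because $M$ is scalar on each eigenspace) lands us on the first normal form $\mathrm{diag}(\lambda,\mu,\nu)$. If instead $M$ has a non-real eigenvalue, it has a conjugate pair $\alpha\pm i\beta$ spanning a real $M$-invariant plane $P$; were $H|_P$ definite, $M|_P$ would be $H$-orthogonally diagonalisable and so have real eigenvalues, a contradiction, so $H|_P$ has signature $(1,1)$ and its $H$-orthogonal complement is a definite line. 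Normalising on $P$ then gives the second form, with the real eigenvalue $\lambda$ on the complementary line.

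The substantive work is the non-diagonalisable cases, and the key observation is that the eigenvector of any Jordan block is $H$-null: if $Mv=\lambda v$ and $Mw=\lambda w+v$, then $H(v,v)=H(Mw,v)-\lambda H(w,v)=H(w,Mv)-\lambda H(w,v)=0$. For a single $2\times2$ block the invariant plane is therefore a Lorentzian plane containing the null eigenline, and building an adapted null frame and fixing the residual $O(1,1)$ boost produces the third form; this form also absorbs the triple-eigenvalue Segre type whose largest block has size $2$, by allowing $\mu=\lambda$. For a $3\times3$ block the whole space is a single Jordan chain with null eigenvector, and the analogous null-frame construction yields the fourth form. Finally, the four cases are mutually exclusive because they are distinguished by conjugation-invariants of $M$ --- whether its eigenvalues are real, whether it is diagonalisable, and the size of its largest Jordan block --- none of which two forms share.

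The hard part will be precisely these Jordan cases: with a null eigenvector there is no orthonormal eigenbasis to fall back on, so one must construct by hand a partially null frame that is adapted simultaneously to $H$ and to $N$, and then spend the remaining boost and reflection freedom of $O(2,1)$ to pin the entries to exactly the stated matrices (the off-diagonal $-1$'s and the $\lambda$-dependent diagonals). Verifying the signature claim for the complex-pair plane and carrying out this final entry-by-entry normalisation --- rather than any conceptual difficulty --- is where the bulk of the calculation lies.
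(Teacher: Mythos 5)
Your proposal is correct and takes essentially the same route as the paper: both reduce the problem to classifying $M=H^{-1}N$ by its Jordan structure as an $H$-self-adjoint operator, with the real-diagonalisable and complex-conjugate-pair cases yielding the first two forms and the $H$-nullity of the eigenvector of a Jordan block accounting for the last two. In fact your outline supplies more detail (the nullity computation, the orthogonal eigenspace decomposition, the conjugation-invariants establishing mutual exclusivity) than the paper's proof, which explicitly leaves these details to the reader.
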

\begin{proof} If we follow the usual proof in case that $H$ is definite, the
only breakdown occurs when an eigenvector $H^{-1}N$ turns out to be null. It is
also possible that two eigenvalues occur as a complex conjugate pair. Thus, the
generic normal form for $N$ is diagonal over~${\mathbb{C}}$, as listed first
and second. For the other two possible normal forms
$$\xi=\mbox{\footnotesize$\left[\begin{array}{c}
1\\ 0\\ 1\end{array}\right]$}$$
is an eigenvector of $H^{-1}N$ and is null. These two cases correspond to
the possible non-diagonal Jordan canonical forms for $H^{-1}N$. Details are
left to the reader.
\end{proof}
Let us now consider a pencil $\Pi\equiv\{sN+tH\}$ of real symmetric $3\times 3$ 
matrices. Following~\cite{U}, we shall say that such a pencil is 
{\em non-singular\/} if one of its elements is non-singular. Without loss of
generality, let us suppose that $H$ is non-singular and consider the
homogeneous cubic polynomial
$${\mathbb{C}}^2\ni(s,t)\mapsto\chi(s,t)=\det(sH^{-1}N+t\,{\mathrm{Id}}).$$
Notice that $\chi(s,t)\not=0$ if and only if $sN+tH$ is non-singular.
Therefore, to require that $\chi(s,t)$ has three distinct zeroes on
${\mathbb{CP}}_1$ is independent of choice of~$H$. It is a property only of 
the pencil~$\Pi$ and we shall refer to such pencils as {\em regular\/}. 

\begin{lem}\label{generic_lemma}
Suppose $\Pi\equiv\{sN+tH\}$ is a non-singular regular pencil of real symmetric
$3\times 3$ matrices. Then we may find $N\in\Pi$ and
$\xi\in{\mathbb{R}}^3\setminus\{0\}$ such that $N\xi=0$. Each such $N$
determines, uniquely up to scale, a non-singular $H\in\Pi$ such that\/
${\mathrm{trace}}(H^{-1}N)=0$. This normal form $(N,\xi,H)$ can be arranged to
depend smoothly on the pencil\/~$\Pi$.
\end{lem}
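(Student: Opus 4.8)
The plan is to read off the triple $(N,\xi,H)$ from the eigenstructure of $H_0^{-1}M$ for a convenient reference pair of generators of $\Pi$. First I would fix a non-singular $H_0\in\Pi$, which exists because $\Pi$ is non-singular, together with a second generator $M$, and set $B_0\equiv H_0^{-1}M$. Up to scale the singular elements of $\Pi$ are exactly $M-\lambda H_0$ for $\lambda$ an eigenvalue of $B_0$, these being the three roots of the characteristic cubic $\chi$. Regularity forces the three eigenvalues of $B_0$ to be distinct, so $B_0$ is diagonalisable over $\mathbb{C}$; and since $\chi$ has real coefficients at least one eigenvalue $\lambda_0$ is real. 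I would then take $N\equiv M-\lambda_0 H_0$, a real symmetric singular matrix. Because $\lambda_0$ is a simple root, $0$ is a simple eigenvalue of $H_0^{-1}N$, so $N$ has rank exactly $2$ and its kernel is a single real line; any non-zero $\xi$ in that line gives $N\xi=0$. This produces the first assertion and shows that $\xi$ is determined up to scale.

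Next, with $N$ fixed, I would locate $H$ by imposing ${\mathrm{trace}}(H^{-1}N)=0$ along the pencil. Every element of $\Pi$ is, up to scale, of the form $N-\mu H_0$ with $\mu\in\mathbb{C}\cup\{\infty\}$ (the value $\mu=\infty$ giving $H_0$), and writing $B\equiv H_0^{-1}N$ one has $N-\mu H_0=H_0(B-\mu\,{\mathrm{Id}})$, whence $(N-\mu H_0)^{-1}N=(B-\mu\,{\mathrm{Id}})^{-1}B$. The eigenvalues of $B$ are $0$ together with two further values $a,b$, distinct from one another and from $0$ by regularity, so ${\mathrm{trace}}\big((N-\mu H_0)^{-1}N\big)=\frac{a}{a-\mu}+\frac{b}{b-\mu}$. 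Clearing denominators, the vanishing of this trace is the \emph{linear} equation $2ab-(a+b)\mu=0$, which has the unique solution $\mu=2ab/(a+b)$ in $\mathbb{CP}_1$ (with $\mu=\infty$, i.e. $H=H_0$, precisely when $a+b=0$). Setting $H\equiv N-\mu H_0$ then gives the desired element, unique up to scale because the equation is linear in $\mu$.

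The point requiring care --- and the main obstacle --- is to confirm that this $H$ is a genuine real, non-singular element, and that the whole construction is smooth. Reality of $\mu$ follows from the fact that $\{a,b\}$ is stable under complex conjugation: either both are real, or $a=\bar b$, and in each case $ab$ and $a+b$ are real. Non-singularity of $H$ amounts to $\mu\notin\{0,a,b\}$: one checks that $\mu=0$ would force $ab=0$ and that $\mu=a$ or $\mu=b$ would force $a=b$, both impossible, while the case $\mu=\infty$ merely returns the non-singular $H_0$. For smoothness I would invoke regularity once more: distinct roots are simple, so $\lambda_0$, the line $\xi$, and the pair $\{a,b\}$ (equivalently the real quantities $a+b$ and $ab$) all depend smoothly on the coefficients of $\Pi$ by the implicit function theorem, and hence so does $\mu=2ab/(a+b)$. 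The number of real roots is generically locally constant --- one or three --- and a consistent choice of $\lambda_0$ (for instance by ordering the real roots) then makes $(N,\xi,H)$ depend smoothly on $\Pi$, which is exactly the one-or-three alternative recorded before the lemma.
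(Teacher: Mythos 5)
Your proof is correct, and it takes a genuinely different route from the paper's. The paper proves this lemma by first putting the pair $(N,H)$ into explicit simultaneous normal form --- the diagonalisation \eqref{orthogonally_diagonalise} when $H$ is definite, and the four mutually exclusive forms of Lemma~\ref{indefinite_normal_forms} when $H$ is indefinite --- then computing $\chi(s,t)$ case by case, using regularity to discard the two Jordan-type cases, and solving ${\mathrm{trace}}\big((H-tN)^{-1}N\big)=0$ separately in each surviving case. You bypass the normal forms entirely: regularity gives distinct eigenvalues of $H_0^{-1}M$, a real root of the characteristic cubic supplies the singular $N$ and its one-dimensional kernel, and the trace condition reduces to the single uniform linear equation $2ab-(a+b)\mu=0$ in the nonzero eigenvalues $a,b$ of $H_0^{-1}N$, with reality of the solution coming from conjugation-invariance of $\{a,b\}$ and non-singularity of $H$ from the fact that $\mu\in\{0,a,b\}$ would contradict regularity. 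The core computation is the same --- the paper's four explicit answers, e.g.\ $t=(\mu+\nu)/(2\mu\nu)$, are precisely your formula evaluated in each normal form, in the reciprocal parametrisation $H-tN$ --- but your argument buys uniformity: no case analysis, no need for Lemma~\ref{indefinite_normal_forms}, and the indefinite and complex-eigenvalue cases are handled for free. What the paper's route buys in exchange is the explicit matrix representatives (in the spirit of Uhlig's canonical forms), which the appendix also wants for its own sake and which make the one-or-three count of degenerate elements immediate. One point of bookkeeping you handle correctly but the paper never meets: in your parametrisation $N-\mu H_0$ the solution can sit at $\mu=\infty$ (i.e.\ $H=H_0$, when $a+b=0$), which you rightly treat projectively, whereas in the paper's parametrisation $H-tN$ the point at infinity is the singular element $N$, so its $t$ is always finite.
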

\begin{proof}Whether $\Pi$ is regular or not, it is clear from the normal forms
(\ref{orthogonally_diagonalise}) and in Lemma~\ref{indefinite_normal_forms}
that, after a change of basis determined by the matrix~$A$, we may replace $N$
by $N\mp\lambda H$ so that, without loss of generality, these normal forms are
achieved with the $+$ sign for $H$ and with $\lambda=0$ in~$N$. Calculating
$\chi(s,t)$ in each case, we find the following cubic polynomials
$$\begin{array}{c}
t(t+\mu s)(t+\nu s)\quad\mbox{or}\quad t(t+\mu s)(t-\nu s)
\quad\mbox{or}\quad t[(t+\alpha s)^2+\beta^2s^2)\\[4pt]
\mbox{or}\quad t^2(t+\mu s)\quad\mbox{or}\quad t^3
\end{array}$$
Regularity of the pencil immediately eliminates the last two possibilities and
we are reduced to the following normal forms:
$$H=\mbox{\footnotesize$\left[\begin{array}{ccc}
1&0&0\\ 0&1&0\\ 0&0&1\end{array}\right]$}
\mbox{ and }
N=\mbox{\footnotesize$\left[\begin{array}{ccc}
0&0&0\\ 0&\mu&0\\ 0&0&\nu\end{array}\right]$}$$
in case $H$ is definite, and 
$$H=\mbox{\footnotesize$\left[\begin{array}{ccc}
1&0&0\\ 0&1&0\\ 0&0&-1\end{array}\!\right]$}
\mbox{ and }
N=\mbox{\footnotesize$\left[\begin{array}{ccc}
0&0&0\\ 0&\mu&0\\ 0&0&\nu\end{array}\right]$}
\mbox{ or }\mbox{\footnotesize$\left[\begin{array}{ccc}
\lambda&0&0\\ 0&\mu&0\\ 0&0&0\end{array}\right]$}
\mbox{ or }
\mbox{\footnotesize$\left[\begin{array}{ccc}
0&0&0\\ 0&\alpha&\beta\\ 0&\beta&-\alpha\end{array}\!\right]$}$$
in case $H$ is indefinite. In each case, the equation 
$${\mathrm{trace}}\big((H-tN)^{-1}N\big)=0$$
has a unique solution
for $t$, namely
$$t=\frac{\mu+\nu}{2\mu\nu}\quad\mbox{or}\quad
t=\frac{\nu-\mu}{2\mu\nu}\quad\mbox{or}\quad
t=\frac{\lambda+\mu}{2\lambda\mu}\quad\mbox{or}\quad
t=\frac{\alpha}{\alpha^2+\beta^2},$$
respectively. Regularity of the pencil ensures that the numerator in these
expressions is non-zero. Also, in this case, smooth dependence of $(N,\xi,H)$
on the pencil is clear from smooth dependence of the normal forms because
regularity means that the eigenvalues of $H^{-1}N$ are distinct and hence
depend smoothly on the pair $(N,H)$.
\end{proof}

\end{document}